\newcommand{\stw}{\mathbb{S}^2}
\newtheorem{proposition}{Proposition}
\newtheorem{theorem}{Theorem}
\newtheorem{lemma}{Lemma}
\newtheorem{corollary}{Corollary}
\newtheorem{question}{Question}
\newtheorem{remark}{Remark}
\begin{document}
\setstcolor{red}
\title[Self-dual Maps I : antipodality]{Self-dual Maps I : antipodality} 

\thanks{$^1$ Partially supported by CONACyT 166306 and PAPIIT-UNAM IN112614}
\thanks{$^2$ Partially supported by grant PICS07848 and INSMI-CNRS}
\author[Luis Montejano]{Luis Montejano$^1$}
\address{Instituto de Matem\'aticas, Universidad Nacional A. de M\'exico at Quer\'etaro
Quer\'etaro, M\'exico, CP. 07360}
\email{luis@im.unam.mx}
\author[Jorge L. Ram\'irez Alfons\'in]{Jorge L. Ram\'irez Alfons\'in$^2$}
\address{
UMI2924 - J.-C. Yoccoz, CNRS-IMPA, Brazil and Univ.\ Montpellier, France }
\email{jorge.ramirez-alfonsin@umontpellier.fr}
\author[Ivan Rasskin]{Ivan Rasskin}
\address{IMAG, Univ.\ Montpellier, CNRS, Montpellier, France}
\email{ivan.rasskin@umontpellier.fr}

\subjclass[2010]{Primary 05C10}

\keywords{Self-dual maps, antipodality}

\begin{abstract} A self-dual map $G$ is said to be {\em antipodally self-dual} if the dual map $G^*$ is antipodal embedded in $\stw$ with respect to $G$. In this paper, we investigate necessary and/or sufficient conditions for a map to be antipodally self-dual. In particular, we present a combinatorial characterization for map $G$ to be antipodally self-dual in terms of certain {\em involutive labelings}.
The latter lead us to obtain necessary conditions for a map to be {\em strongly involutive} (a notion relevant for its connection with convex geometric problems). We also investigate the relation of antipodally self-dual maps and the notion of {\em antipodally symmetric} maps. It turns out that the latter is a very helpful tool to study questions concerning the {\em symmetry} as well as the {\em amphicheirality} of {\em links}.
\end{abstract}

\maketitle

\section{Introduction}

Let $G$ be a {\em map}, that is, a graph cellularly embedded in the sphere. Then $G=(V,E,F)$ has a natural {\em geometric dual} $G^*=(V^*,E^*,F^*)$ where each face in $F$ correspond to a vertex in $V^*$ and two vertices in $V^*$ are adjacent if the corresponding faces in $G$ share an edge. 
A map $G$ is called {\em self-dual} if there is a bijection from $V$ and $F$ to $V^*$ and $F^*$ which reverses inclusion.
\smallskip
 
Self-dual maps have been the subject of numerous investigations in different fronts : self-dual polyhedra and ranks \cite{GS}, isometries in $\stw$ \cite{SS}, eigenvalues of $h$-graphs \cite{T}, rigidity \cite{SC}, tilings \cite{SS1}, etc.
\smallskip

A self-dual map $G$ is said to be {\em antipodally self-dual} if the dual map $G^*$ is antipodally embedded with respect to $G$. In other words, the map $G$ is antipodally self-dual if the following holds for any $x\in \mathbb{S}^2$
\smallskip

1) if  $x\in V(G)$ then $-x\in V(G^*)$ and
\smallskip

2) if $x\in e \in E(G)$ then $-x\in e^*\in E(G^*)$, that is, $e^*$ is antipodally embedded in $\mathbb{S}^2$ with respect to the embedding of $e$.
\smallskip

Antipodally self-dual maps are closely related with the notion of {\em strongly involutive} maps (see beginning of Section \ref{subsec;strong}) and thus relevant for their connection with convex geometric problems as the well-known V\'azsonyi's problem on {\em ball polyhedra} (as reported in \cite{E}, see also \cite{KS}), the chromatic number of {\em distance graphs} on the sphere \cite{L} and {\em Reuleaux polyhedra} \cite{MMO}. As we will see, antipodally self-dual maps are also closely related with the notion of {\em antipodally symmetric} maps. The latter turns out very useful to study questions concerning the {\em symmetry} as well as the {\em amphicheirality} of links, see \cite{MRAR2}.
\smallskip

The main goal of this paper is to investigate necessary and/or sufficient conditions for a map to be antipodally self-dual.  
\smallskip

The paper is organized as follows. In the next section, we give a brief overview of some notions on self-dual maps needed for the rest of the paper. Given a map $G$, we recall three special close related maps ({\em medial graph $med(G)$, square graph} $G^\square$ and {\em vertex-face incidence graph} $I(G)$) that turn out to be very useful for our propose.  
\smallskip

In Section \ref{sec;antipodally self-dual}, we first recall some classical results between isometries in $\stw$ and maps. We then present a result  giving necessary conditions of an antipodally self-dual map $G$ in terms of {\em symmetric cycles} in $I(G)$ (Theorem \ref{theom;ant1}). Afterwards, we discuss the connection between antipodally self-dual maps and {\em strongly involutive} maps and give a combinatorial characterization for a map $G$ to be antipodally self-dual in terms of certain {\em involutive labelings} of $I(G)^\square$ (Theorem \ref{thm;fixedpoint}). As a consequence, we obtain necessary conditions for a map $G$ to be strongly involutive in terms of $I(G)^\square$ (Corollary \ref{cor;stronginv}).
\smallskip

In Section \ref{sec:ex}, we characterize three different infinite families of antipodally self-dual maps (Propositions \ref{prop;wheel}, \ref{prop;ear} and \ref{prop;pancake}). We also present a more general construction (Theorem \ref{thm;adh}). 
\smallskip

In Section \ref{sec;self-anti}, we study antipodally symmetric maps. Besides many properties, we show that if $G$ is an antipodally self-dual map then both $med(G)$ and $I(G)$ are antipodally symmetric maps (Lemma \ref{lem:key}).

\section{Maps preliminaries}\label{sec;selfdual}

Let $G$ be a planar graph. A {\em map} of $G=(V,E,F)$ is the image of an embedding of $G$ into $\mathbb{S}^2$ where the set of vertices are a collection of distinct points in $\mathbb{S}^2$ and the set of edges are a collection of Jordan curves joining two points in $V$ satisfying that $\alpha\cap\alpha'$ is either empty or a point in the endpoints for any pair of Jordan curves $\alpha$ and $\alpha'$.  Any embedding of the topological realization of $G$ into $\mathbb{S}^2$ partitions the 2-sphere into simply connected regions of $\mathbb{S}^2\setminus G$ called the {\em faces} $F$ of the embedding.
\smallskip

Given a map $G$, we may construct the {\em dual map} $G^*=(V^*,E^*,F^*)$ by placing a vertex
$f^*$ in the interior of each face $f$ of $G$, and for each edge $e$ of $M$ draw a {\em dual edge} $e^*$ connecting the vertices $f^*_1$ and $f^*_2$ (corresponding to the two faces $f_1$ and $f_2$ sharing edge $e$) by crossing $e$ transversely. We denote by $X(G,G^*)$ the set of intersection points of map $G$ and map $G^*$.
\smallskip

Two maps $G_1=(V_1,E_1,F_1)$ and $G_2=(V_2,E_2,F_2)$ of the same graph are
{\em isomorphic} if there is an isomorphism $\phi:(V_1,E_1,F_1)\rightarrow (V_2,E_2,F_2)$ preserving incidences. We say that a map $G=(V,E,F)$ is a {\em self-dual map} if the maps $G=(V,E,F)$ and $G^*=(V^*,E^*,F^*)$ are isomorphic, that is, there is an isomorphism  $\phi:(V,E,F)\rightarrow (F^*,E^*,V^*)$ preserving incidences. 
\smallskip

Given maps $G=(V,E,F)$ and $G^*=(V^*,E^*,F^*)$ we define the following auxiliaries maps.
\smallskip

- The {\em squares graph of $G$} is the map $G^\square$ obtained by the simultaneous drawing of $G\cup G^*$ with all the edges split at the intersection points of an edge $e$ with its dual edge $e^*$. We thus have that every face of $G^\square$ is a square formed by half-edges of $G$ and $G^*$. 
\smallskip

For each square face in $G^\square$, we define two types of diagonals: the {\em intersecting diagonal} which is the edge joining the intersections points and the {\em incidence diagonal} which is the edge joining a vertex in $V(G)$ to a vertex in $V(G^*)$. 
\smallskip

- The {\em vertex-face incidence graph} is the map $I(G)$ having as vertices $V\cup V^*$ and as edges are all the incidence diagonals of $G^\square$.
\smallskip

- The {\em medial of $G$} is the map $med(G)$ having as vertices the set of intersections points of $E\cap E^*$ and as edges the set of all the intersecting diagonals of $G^\square$.

\begin{figure}[H]
\centering
\includegraphics[width=1\linewidth]{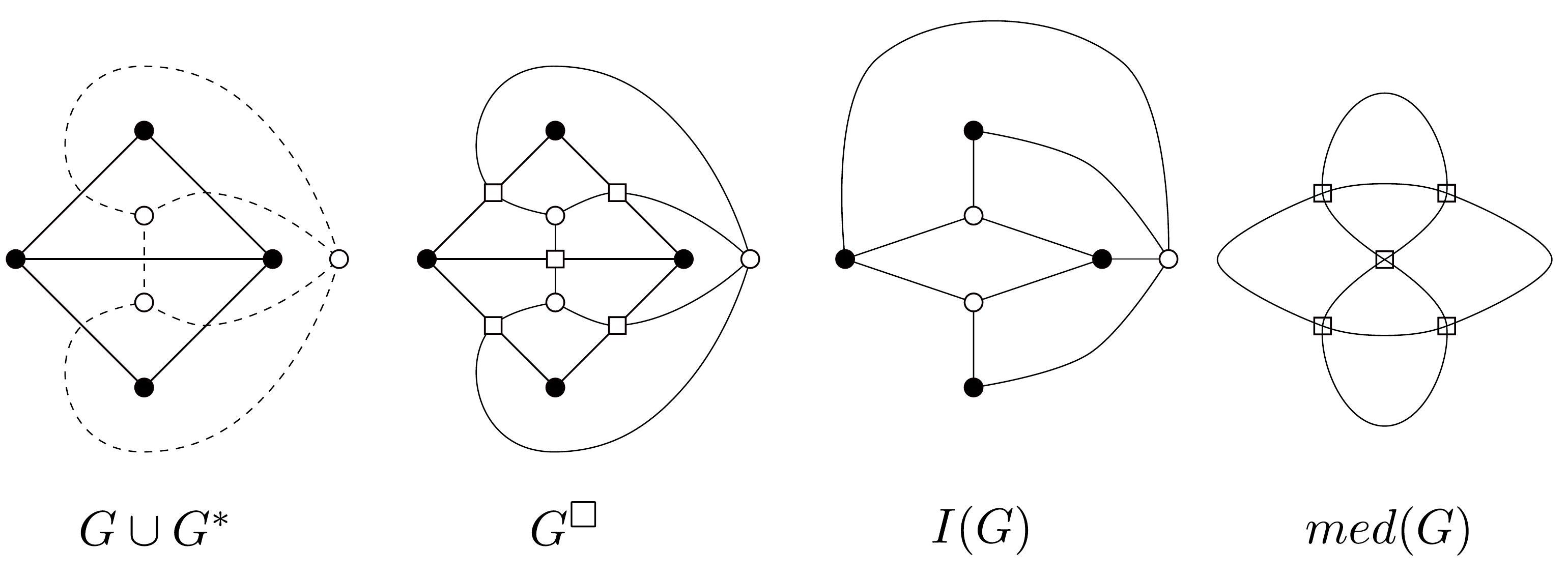}
\caption{A map and its dual, the squares graph, the vertex-face incidence graph and the medial.}
\label{fig11a}
\end{figure}

Throughout the paper, we will represent the vertices of $G$ with black circles, the vertices of $G^*$ with white circles and the intersection points with white squares and the vertices of the medial with transparent squares. 

Notice that $I(G)$ and $med(G)$ are dual from each other for any map $G$. Hence, we can construct the squares graph of the vertex-face incidence graph which it turns out to be very useful for our propose. 
\begin{figure}[H]
\centering
\includegraphics[width=0.8\linewidth]{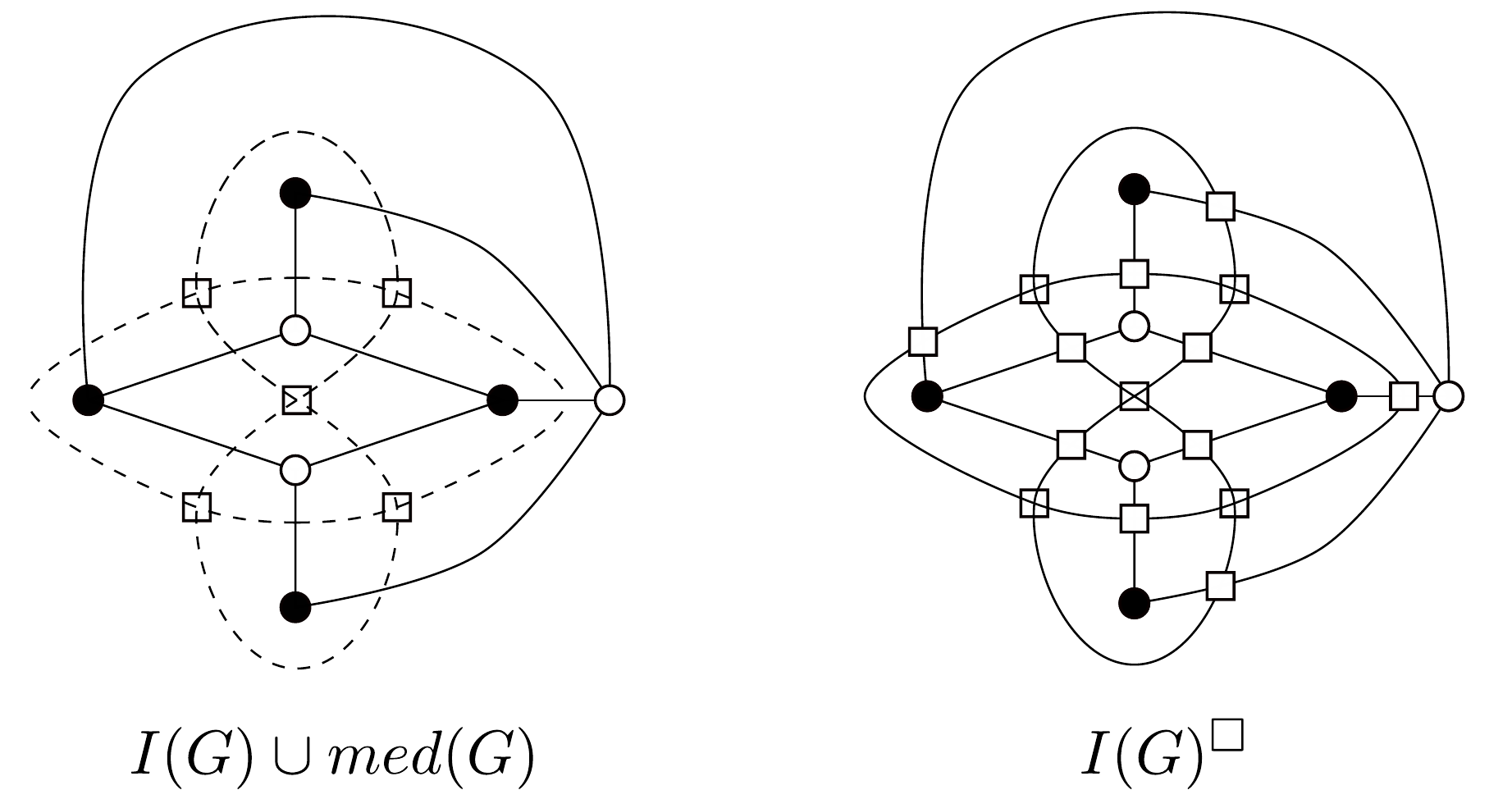}
\caption{(Right) $I(G)$ (straight edges and black and vertices in white circles) and $med(G)$ (dashed edges and vertices in transparent squares) (Left) Graph $I(G)^\Box$.}
\label{fig11}
\end{figure}

\section{antipodally self-dual maps}\label{sec;antipodally self-dual}

We recall that $Aut(G)$ is the group formed by the set of all {\em automorphism} of $G$ (i.e., the set  of isomorphisms of $G$ into itself).  We will denote by $Iso(G)$ the set of all {\em duality isomorphisms} of $G$ into $G^*$. We notice that $Iso(G)$ is not a group since the composition of any two of them is an automorphism. 
\smallskip

Let us suppose that $G=(V,E,F)$ is a self-dual map so that there is a bijection 
 $\phi:(V,E,F)\rightarrow (F^*,E^*,V^*)$. Following $\phi$ with the correspondence $*$ gives a permutation on $V\cup E\cup F$ which preserve incidences but reverses dimension of the elements. The collection of all such permutations or {\em self-dualities} generate a group $Dual(G)=Aut(G)\cup Iso(G)$ in which the automorphisms $Aut(G)$ are contained as a subgroup of index 2.   
\smallskip

It is known \cite[Lemma 1]{SS} that for a given map $G$ there is an homeomorphism $\rho$ of $\mathbb{S}^2$ to itself such that for every $\sigma\in Aut(G)$ we have that $\rho\sigma\in Isom(\mathbb{S}^2)$ where $Isom(\stw)$ is the group of isometries of the 2-sphere. In other words, any planar graph $G$ can be drawn on the 2-sphere such that any automorphism of $G$ act as an isometry of the sphere.  This was extended in \cite{SS} by showing that given any self-dual graph $G$ there are maps $G$ and $G^*$ so that $Dual(G)$ is realized as a group of spherical isometries. 

From now on, we will denote by $\widehat{G}=\rho(G)$ and $\widehat{\sigma}=\rho \sigma$ for a certain homeomorphism $\rho$ satisfying the above property.
\smallskip

A self-dual map $G$ is {\em antipodally self-dual} if $-\widehat{G}=\widehat{G}^*$ where $-G$ is the map consisting of the set of points $\{-x\in \stw \ |\  x\in G\}$. 
\smallskip

Let us present a result giving necessary combinatorial conditions for a map to be antipodally self-dual. 
By a {\em symmetric cycle} $C$ of a planar graph $G$ we mean  there is an automorphism $\sigma(G)$ such that $\sigma(C)=C$ and $\sigma(int(C))=ext(C)$, that is, the induced graph in the interior of $C$ is isomorphic to the induced graph in the exterior of $C$. 

\begin{theorem}\label{theom;ant1} Let $G$ be antipodally self-dual. Then, $I(G)$ always admit at least one symmetric cycle. Moreover, all symmetric cycles in $I(G)$ are of length $2n$ with $n\ge 1$ odd.
\end{theorem}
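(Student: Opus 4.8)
The plan is to extract the symmetric cycle from the antipodal structure directly. Since $G$ is antipodally self-dual, there is a homeomorphism $\rho$ of $\stw$ and a duality isomorphism realized so that $-\widehat{G}=\widehat{G}^*$; composing the duality with the antipodal map gives a self-duality $\tau$ of $G$ that, on the sphere, acts as $x\mapsto -x$. I would first observe that $\tau$ permutes $V(I(G))=V\cup V^*$ (it sends black vertices to white vertices and vice versa, since $\widehat{V}$ goes to $-\widehat{V}=\widehat{V^*}$), and that geometrically $\tau$ is the antipodal involution of $\stw$, hence fixed-point free. Because $I(G)$ is drawn as a subset of $\stw$ and $\tau$ restricts to an automorphism of $I(G)$ with no fixed vertex and no fixed edge, and since the antipodal map on $\stw$ reverses orientation and is fixed-point free, any $\tau$-invariant simple closed curve must be a ``great-circle-like'' cycle whose interior and exterior are swapped; the task is to produce such an invariant cycle in the $1$-skeleton of $I(G)$.

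Next I would produce that invariant cycle explicitly. Pick any vertex $v\in V\subseteq V(I(G))$; its antipode $-\widehat v\in V^*$ is also a vertex of $I(G)$. I want a path in $I(G)$ from $v$ to $-\widehat v$; then applying $\tau$ gives a path from $-\widehat v$ back to $\tau(-\widehat v)=v$, and the concatenation is a closed walk that is $\tau$-invariant (as a point set, $\tau$ swaps the two halves). After reducing to a simple cycle $C$ inside this invariant set — using that $I(G)$ is $2$-connected and that $\tau$ acts freely so no vertex of $C$ is fixed — one gets a cycle $C$ with $\tau(C)=C$, and $\tau$ interchanges $\mathrm{int}(C)$ with $\mathrm{ext}(C)$ because $\tau$ is the antipodal map. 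Since $\tau$ restricted to $I(G)$ is a graph automorphism, this $C$ is by definition a symmetric cycle of $I(G)$, proving the first assertion.

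For the length statement, let $C$ be any symmetric cycle of $I(G)$ with witnessing automorphism $\sigma$. Because $I(G)$ is bipartite with parts $V$ and $V^*$ (every incidence diagonal of $G^\square$ joins a vertex of $G$ to a vertex of $G^*$), every cycle of $I(G)$ has even length, say $2n$; I must show $n$ is odd. Here is where I expect the main obstacle: ruling out even $n$. The idea is that $\sigma$, swapping interior and exterior of $C$, must map $C$ to itself by a "rotation or reflection" that is, in effect, a half-turn — more precisely, the quotient of the sphere by the relevant involution, or a parity/winding argument along $C$, forces the identification of $C$'s vertices to pair up antipodally along the cycle, so the cyclic shift involved has order dividing the structure in a way that makes $n$ odd. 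Concretely, I would track the alternation of colors (black/white) around $C$: a symmetry exchanging inside and outside and respecting the bipartition must act on the $2n$ vertices of $C$ as an involution with a "shift by $n$" pattern; for this shift to be color-reversing (sending each black vertex to a white vertex, as forced when $\sigma$ comes from a self-duality, or from the antipodal map) one needs $n$ to be odd, whereas $n$ even would send black to black and contradict that $\sigma$ reverses dimension/color. I would make this rigorous by analyzing the induced action of $\sigma$ on the cyclically ordered vertex set of $C$ together with the two-coloring, and I anticipate the delicate point is justifying that the relevant symmetry of $C$ is exactly the antipodal "shift by $n$" rather than some other dihedral element; this will use that $\sigma$ exchanges the two sides of $C$ and that $I(G)$ sits antipodally on $\stw$.
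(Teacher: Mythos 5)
Your strategy for the existence part --- concatenate a path $P$ from $v$ to $-\widehat v$ with $\tau(P)$ and then ``reduce to a simple cycle'' --- has a genuine gap at exactly the step you wave at. A $\tau$-invariant closed walk need not contain a $\tau$-invariant \emph{simple} cycle: if $P$ meets $\tau(P)$ at interior vertices (which $2$-connectivity does nothing to prevent), the point set $P\cup\tau(P)$ is not a simple closed curve, and a simple cycle extracted from it can easily fail to be preserved by $\tau$. Even granting a $\tau$-invariant simple cycle, you would still need to know it is \emph{drawn} as an antipodally symmetric simple closed curve on $\stw$ before concluding that $\tau$ swaps interior and exterior, so the combinatorial extraction does not suffice on its own. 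The paper avoids all of this by arguing geometrically: it first proves (Lemma \ref{lem:key}) that $med(G)$ is antipodally symmetric, then takes the equator of $\stw$, perturbed if necessary so that it misses every vertex of $\widehat{med(G)}$. The cyclic sequence of faces of $\widehat{med(G)}$ crossed by the equator is, since $med(G)^*=I(G)$, a cycle $C$ of $I(G)$, and the antipodal symmetry of $\widehat{med(G)}$ immediately gives that $int(C)$ is isomorphic to $ext(C)$. The equator is a simple closed curve by construction, so no extraction step is ever needed.

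For the length claim your idea (colors alternate around the $2n$-cycle, the involution acts as a shift by $n$, and a color-reversing shift forces $n$ odd) is the same as the paper's, but you explicitly leave unproved the one point that carries the argument: why the witnessing automorphism acts on $C$ as the fixed-point-free shift by $n$ rather than as a reflection of the $2n$-cycle. The paper settles this by redrawing $\widehat{I(G)}$ so that $C$ becomes the equator and invoking antipodality of the self-duality: each black vertex of $\widehat{I(G)}$ is antipodal to a white one, so the relevant symmetry restricted to the equator is the antipodal half-turn, i.e.\ the shift by $n$; since consecutive vertices of $C$ alternate in color, this shift reverses color precisely when $n$ is odd. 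Without an argument of this kind your proof of the second assertion is incomplete.
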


We will prove Theorem \ref{theom;ant1} at the end of Section \ref{sec;self-anti} where the notion of {\em antipodally symmetric} is discussed (and needed for the proof). 

\begin{remark}\label{rem;dd-ant} An antipodally self-dual map $G$ induces an involutive  self-dual isomorphism $\sigma: V(G)\longrightarrow V^*(G)$. The converse is not necessarily true, there are self-dual graphs not admitting an antipodally self-dual map. For instance, the graph $G'$ illustrated in Figure \ref{fig11z} is self-dual but it is not antipodally self-dual. Indeed, it can be easily checked that $I(G')$ admits a symmetric cycle of length 8 (implying that $G'$ is not antipodally self-dual, by Theorem \ref{theom;ant1}), see Figure \ref{fig11z}. 
\end{remark}

\begin{figure}[H]
\centering
\includegraphics[width=0.8\linewidth]{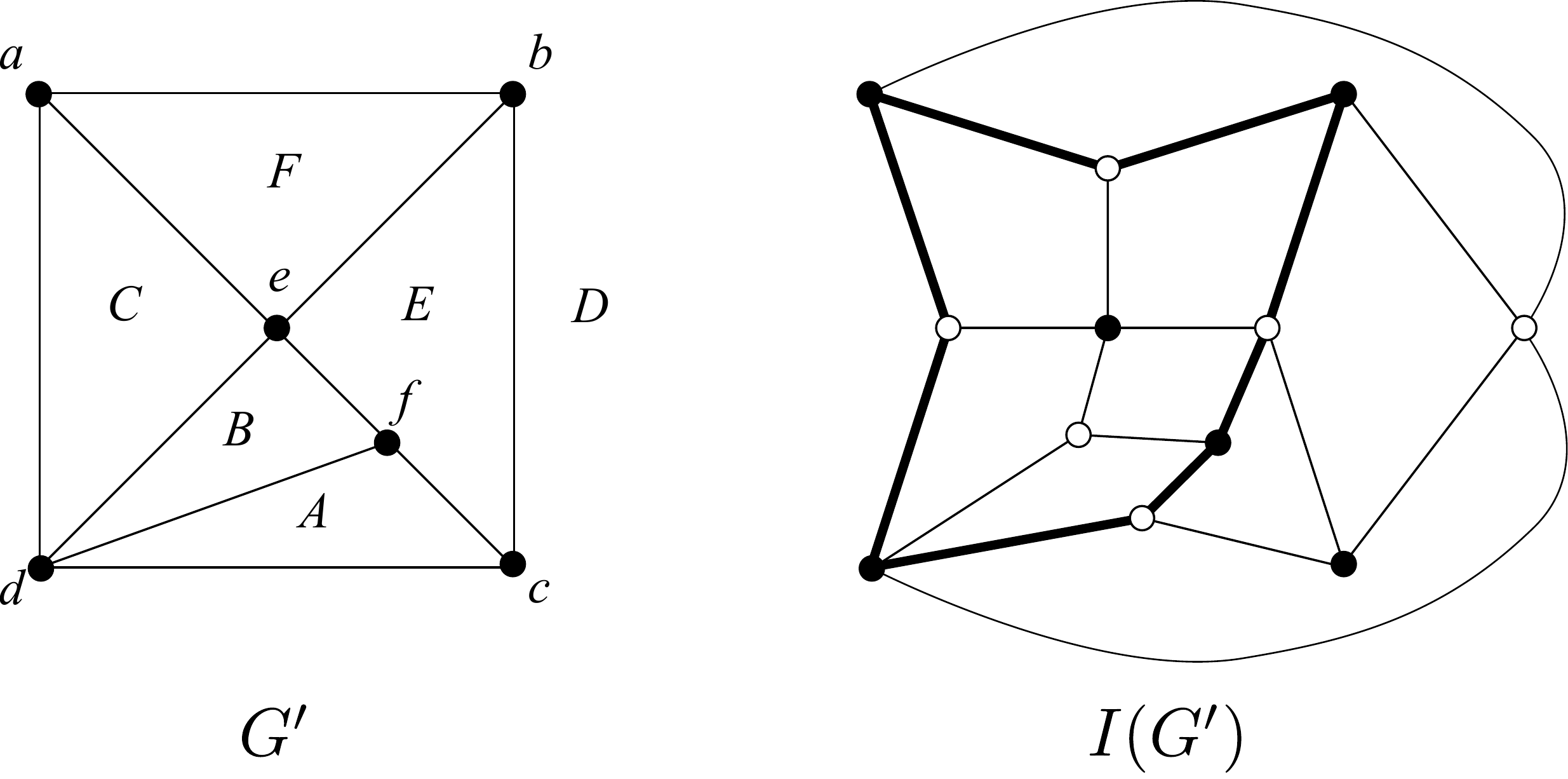}
\caption{(Left) a self-dual map $G'$ (the isomorphism $\sigma$ is given by $\sigma(a)=A, \sigma(b)=B, \sigma(c)=C,$ etc.) not admitting a strongly involutive dual-isomorphism (Right) $I(G')$ admitting a symmetric cycle of length 8 (bold edges).}
\label{fig11z}
\end{figure}

\subsection{Strongly involutive maps and involutive labelings}\label{subsec;strong}
 Let $G$ be a self-dual graph with duality isomorphism $\sigma: G\longrightarrow G^*$. We say that $G$ is {\em strongly involutive} if the following conditions are satisfied:
\smallskip

a) for each pair of vertices $u,v\in V(G), u\in \sigma(v)$ if and only if $v\in\sigma(u)$ and
\smallskip

b) for every vertex $v\in V(G)$, we have that $v\not\in\sigma(v)$.
\smallskip

We notice that a) is equivalent to say that $\sigma^2=id$.
\smallskip

The above conditions are the combinatorial counterpart (in the 3-dimensional case) of a more general geometric object called {\em strong self-dual polytopes}, first introduced by Lov\'asz in \cite{L}. Antipodally self-dual maps are closely related with strongly involutive isomorphism. Indeed, in \cite[Theorem 9]{BMPRA}, it was proved that if $G$ is strongly involutive then $G$ is antipodally self-dual. As we will see below, the latter is a straight forward consequence of Theorem \ref{thm;fixedpoint} (see Corollary \ref{cor;stronginv}).
\smallskip

Let $G=(V,E,F)$ be a map and let $X^+=\{x_1,\ldots,x_m\}$ and $X^-=\{\overline{x}_1,\ldots,\overline{x}_m\}$ be two sets with $1\le m\le |V|$ and the property $\overline{\overline{x_i}}=x_i$. Let $\mathcal{P}(X^+\cup X^-)$ be the set of subsets of $X^+\cup X^-$. An \textit{involutive labeling} of $G$ is a function $\Lambda:V\rightarrow\mathcal{P}(X^+\cup X^-)$ satisfying the following properties: 
\smallskip

\begin{itemize}
\item[$(i)$]$|\Lambda(v)|=1,2$ for every $v\in V$.
\item[$(ii)$]If $|\Lambda(v)|=2$ then $\Lambda(v)=\{x_i,\overline{x}_i\}$ for some $1\le i \le m$. In this case, we say that $v$ is a \textit{fixed vertex} of $\Lambda$ and we write $x_i=\overline{x}_i$ (instead of $\{x_i,\overline{x}_i\}$).
\item[$(iii)$]$\Lambda(u)\cap\Lambda(v)\not=\emptyset$ if and only if $u=v$.
\item[$(iv)$]$\{\Lambda^{-1}(x_i),\Lambda^{-1}(x_j)\}\in E$ if and only if $\{\Lambda^{-1}(\overline{x}_i),\Lambda^{-1}(\overline{x}_j)\}\in E$ where\\
 $\Lambda^{-1}(x_i):=\{v\in V\mid x_i\in\Lambda(v)\}$.
\end{itemize}
\smallskip






Let $G^\square=(V^\square,E^\square,F^\square)$ be the square graph associated to a map $G=(V,E,F)$. Recall that $V^\square=V_V\cup V_E \cup V_F$ where $V_V$ are the vertices of $G$, $V_E$ are the vertices on the edges of $G$ and $V_F$ are the vertices of $G^*$ (one for each face of $G$).
\smallskip

\begin{remark}\label{rem;inv-aut} An involutive labeling of $I(G)^\square$ naturally induces an automorphism of $I(G)$ 

$$\begin{array}{rll}
\sigma_\Lambda : & V\cup V^* \to & V\cup V^*\\
& v \mapsto & u\\
\end{array}$$

where $\Lambda(u)=\overline{\Lambda(v)}$ (the adjacency preserving property of $\sigma_\Lambda$ is obtained from $(ii)$).
\smallskip

a) If vertex $v$ was assigned labels $k$ and $\bar k$ (and thus $k=\bar k$) then it will be a fixed vertex under $\sigma_\Lambda$. 
\smallskip

b) $\sigma_\Lambda^2=Id$.
\smallskip

c) $\sigma_\Lambda$ corresponds to an involutive  duality isomorphism $\sigma:G\rightarrow G^*$ if and only if 
the labels of the black vertices are the opposite to those of the white vertices in $I^\square(G)$.  
\smallskip

\end{remark}

\begin{remark} Let $G$ be a self-dual map. We have that $G$ is strongly involutive if and only if  $I(G)$ admits an involutive labeling without edges which extremes are labeled by $k$ and $\bar k$.
\end{remark} 

\subsection{Characterizing antipodally self-dual maps}\label{subsec;isomdual-ant} We are interested in giving necessary and sufficient combinatorial conditions for a map to be antipodally self-dual.
\smallskip

\begin{remark}\label{rem;sigmaSqr} We have that any $\sigma\in Aut(G)$ naturally induces $\sigma^\square\in Aut(G^\square)$ with $\sigma^\square$ preserving incidences, that is, if $v_V\in V_V$ is adjacent to $v_E\in V_E$ (resp. $v_E\in V_E$ is adjacent to $v_F\in V_F$) then $\sigma^\square(v_V)$ is adjacent to $\sigma^\square(v_E)$ (resp. $\sigma^\square(v_E)$ is adjacent to $\sigma^\square(v_F)$) and where $V_V, V_E$ and $V_F$ are mapped to $V_V, V_E$ and $V_F$ respectively. We finally notice that there might exist $\gamma\in Aut(G^\square)$ not necessarily arising from an automorphism of $G$.
\end{remark}
  
\begin{lemma}\label{lem:pointsfixes} Let $H$ be a map and let $\sigma\in Aut(H)$. Then, $\widehat \sigma$ has a fixed point in $\stw$ if and only if $\sigma^\square$ has a fixed vertex in $H^\square$.
\end{lemma}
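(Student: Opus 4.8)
The plan is to prove the two implications separately; all the substance is in the ``only if'' direction, and the hypothesis $\sigma\in Aut(H)$ (as opposed to an arbitrary automorphism of $H^\square$) enters at a single decisive point. The ``if'' direction is immediate: if $\sigma^\square$ fixes a vertex $w$ of $H^\square$, then the point of $\stw$ carrying $w$ in the drawing $\widehat H^\square=\rho(H^\square)$ is fixed by $\widehat\sigma$, since $\widehat\sigma$ realizes the action of $\sigma^\square$ on $\widehat H^\square$.

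For the converse, suppose $\widehat\sigma(p)=p$ for some $p\in\stw$. The drawing $\widehat H^\square$ is a cell decomposition of the sphere, so $p$ lies in the interior of exactly one cell---a vertex, an edge, or a square face---and $\widehat\sigma$, being a homeomorphism of $\stw$ that preserves $\widehat H^\square$ (it realizes $\sigma^\square\in Aut(H^\square)$, cf.\ Remark \ref{rem;sigmaSqr}), maps that cell to itself. The key input is that $\sigma^\square$, hence $\widehat\sigma$, preserves the three vertex types, sending $V_V$, $V_E$, $V_F$ to $V_V$, $V_E$, $V_F$ respectively (Remark \ref{rem;sigmaSqr}). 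I combine this with two elementary structural facts about square graphs: \emph{(a)} every edge of $H^\square$ joins a vertex of $V_E$ to a vertex of $V_V\cup V_F$, being one half of an edge of $G$ or of $G^*$ running from a crossing point to one of its endpoints; and \emph{(b)} the boundary $4$-cycle of each square face of $H^\square$ contains exactly one vertex of $V_V$ (and exactly one of $V_F$), its four corners being a vertex of $G$, a vertex of $G^*$, and the two crossing points on the two edges of $G$ bounding the corresponding flag.

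I then treat the three cases. If the cell containing $p$ is a vertex, that vertex is itself fixed by $\sigma^\square$. If it is the interior of an edge $a$, then $\widehat\sigma(a)=a$, and by \emph{(a)} the two endpoints of $a$ have distinct types, so type preservation forces $\widehat\sigma$ to fix each of them; either one is the desired fixed vertex. If it is the interior of a square face $Q$, then $\widehat\sigma$ fixes $Q$ and hence its boundary $4$-cycle, and by \emph{(b)} that cycle has a unique vertex $v\in V_V$, so type preservation gives $\widehat\sigma(v)=v$. In every case $\sigma^\square$ has a fixed vertex, which finishes the proof.

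The main obstacle is ensuring that the edge and face cases actually produce a fixed \emph{vertex}: a priori an isometry fixing an interior point of an edge could swap its two endpoints, and one fixing an interior point of a square face could act on that face as a rotation, or as a reflection through a pair of opposite edges, leaving no corner fixed. Facts \emph{(a)} and \emph{(b)}, together with the type preservation of Remark \ref{rem;sigmaSqr}, are precisely what rules these possibilities out, and type preservation genuinely requires $\sigma$ to come from $Aut(H)$: the duality automorphism of $H^\square=(H^*)^\square$, for instance, interchanges $V_V$ and $V_F$, and the statement fails for it.
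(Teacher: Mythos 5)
There is a genuine gap, and it sits exactly where you declare the proof ``immediate''. In the direction ``$\sigma^\square$ has a fixed vertex $\Rightarrow\widehat\sigma$ has a fixed point'', you claim that the point of $\stw$ carrying the fixed vertex $w$ in the drawing $\rho(H^\square)$ is itself fixed by $\widehat\sigma$, because $\widehat\sigma$ ``realizes the action of $\sigma^\square$''. But Remark \ref{rem;sigmaSqr} is purely combinatorial: it produces an abstract automorphism $\sigma^\square$ of the graph $H^\square$ and says nothing about where the vertices of $H^\square$ are drawn. The isometry $\widehat\sigma$ is only guaranteed to preserve the drawing $\widehat H$; the crossing points (type $V_E$) and the dual vertices (type $V_F$) are additional points whose placement has not been made equivariant. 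Concretely, if $w\in V_E$ is the crossing point on an edge $e$ with $\sigma(e)=e$, all you know is that $\widehat\sigma$ maps the arc $\widehat e$ to itself as a set; it may move the drawn crossing point along that arc. The correct conclusion --- and the actual content of the lemma --- is that $\widehat\sigma$ has \emph{some} fixed point on $\widehat e$ (resp.\ in the closed face, when $w\in V_F$), which the paper extracts by applying Brouwer's fixed-point theorem to $\mathbb{B}^1$ and $\mathbb{B}^2$. So the direction you call immediate is precisely the one that needs the topological input, and as written it asserts something unjustified (and false without an equivariance assumption on the drawing of $H^\square$ that you would still have to prove, essentially by the same fixed-point argument).

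Your ``only if'' direction is sound in substance but both overcomplicated and touched by the same issue: you decompose $\stw$ into the cells of $\rho(H^\square)$, which again presumes that $\widehat\sigma$ preserves that finer drawing. It suffices to use the cells of $\widehat H$ itself: a fixed point $p$ of $\widehat\sigma$ is a vertex of $\widehat H$, or lies in the interior of an edge or of a face of $\widehat H$; that cell is mapped to itself, so the corresponding vertex of $H^\square$ (in $V_V$, $V_E$ or $V_F$ respectively) is fixed by $\sigma^\square$. This is the paper's one-line argument for that implication and makes your facts (a), (b) and the type-preservation bookkeeping unnecessary, correct though they are.
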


 \begin{proof} Let $x\in\stw$. A point $x$ corresponds to a vertex on $H^\square$, say $x^\square$, which lies  properly on either $V, E$ or $F$. If $\widehat \sigma(x)=x$ then $\sigma^\square(x^\square)=x^\square$.
 \smallskip
 
Conversely, let $v\in V^\square=\{V_V\cup V_E\cup V_F\}$ such that $\sigma^\square(v)=v$. We have three cases.
\smallskip
 
Case 1) $v\in V_V$. Then, the point $v\in\stw$ is such that $\widehat \sigma(v)=v$.  
\smallskip
 
Case 2) $v\in V_E$. Suppose $v$ lies properly on an edge $e$. We know that the isometry $\widehat \sigma$ maps $e$ into itself. Since $e$ is topologically equivalent to $\mathbb{B}^1$ then $\widehat \sigma$ is a continuous function sending $\mathbb{B}^1$ to itself. Therefore, by the Brouwer fixed-point theorem there is $x\in e$ such that $\widehat \sigma(x)=x$. 
\smallskip
 
Case 3) $v\in V_F$. Suppose $v$ lies properly on a face $f$. We proceed as in the Case 2.
The isometry $\widehat \sigma$ maps $f$ into itself. Since $f$ is topologically equivalent to $\mathbb{B}^2$ then $\widehat \sigma$ is a continuous function sending $\mathbb{B}^2$ to itself. Therefore, by the Brouwer fixed-point theorem there is $x\in f$ such that $\widehat \sigma(x)=x$. 
 \end{proof}
 
\begin{theorem}\label{thm;fixedpoint} Let $G=(V,E,F)$ be a self-dual map. Then, $G$ is antipodally self-dual if and only if $I(G)^\square$ admits an involutive labeling without fixed vertices. 
 \end{theorem}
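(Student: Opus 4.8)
The plan is to reach the combinatorial condition through a short chain of reformulations. First I would unwind the definition: $G$ is antipodally self-dual exactly when there is a realization with $-\widehat G=\widehat G^*$, which is in turn equivalent to the existence of an involutive self-duality $\sigma$ of $G$ (a bijection $G\to G^*$ reversing incidences with $\sigma^2=\mathrm{id}$) whose geometric realization $\widehat\sigma$ is the antipodal map of $\stw$. Indeed, if $-\widehat G=\widehat G^*$ then the antipodal map itself is such a $\sigma$; conversely, if $\widehat\sigma$ is the antipodal map, applying it to $\widehat G$ and using that $\sigma$ is a duality gives $-\widehat G=\widehat G^*$. Note that such a $\sigma$ acts on $I(G)$ by exchanging $V$ and $V^*$ along $\sigma$ and preserving the incidence diagonals, so $\sigma\in Aut(I(G))$ with $\sigma^2=\mathrm{id}$, and hence induces $\sigma^\square\in Aut(I(G)^\square)$.

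Second comes the one genuinely geometric input: a nontrivial involutive isometry of $\stw$ is, up to conjugation in $O(3)$, a reflection ($\mathrm{diag}(1,1,-1)$), a rotation by $\pi$ ($\mathrm{diag}(1,-1,-1)$), or the antipodal map ($\mathrm{diag}(-1,-1,-1)$), and only the last has no fixed point on $\stw$. Thus for an involutive self-duality $\sigma$, ``$\widehat\sigma$ is the antipodal map'' is equivalent to ``$\widehat\sigma$ has no fixed point in $\stw$''. Applying Lemma~\ref{lem:pointsfixes} with $H=I(G)$ converts this into the purely combinatorial statement that $\sigma^\square$ has no fixed vertex in $I(G)^\square$; the nontrivial direction here is already the Brouwer argument carried out in that lemma. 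So far: $G$ is antipodally self-dual iff there is an involutive self-duality $\sigma$ of $G$ whose square-graph automorphism $\sigma^\square$ of $I(G)^\square$ is fixed-vertex-free.

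Third, I would set up the dictionary with involutive labelings. From such a $\sigma$, assign to every vertex of $I(G)^\square$ a fresh symbol and to its $\sigma^\square$-image the barred symbol: properties $(i)$–$(iii)$ are immediate and $(iv)$ is precisely adjacency preservation by $\sigma^\square$; the labeling has no fixed vertices because $\sigma^\square$ has none; and since $\sigma$ exchanges $V$ and $V^*$, the black vertices get labels opposite to the white ones, which by Remark~\ref{rem;inv-aut}(c) is exactly the ``self-duality type'' labeling. Conversely, an involutive labeling $\Lambda$ of $I(G)^\square$ without fixed vertices gives, via Remark~\ref{rem;inv-aut}, an automorphism $\sigma_\Lambda$ of $I(G)$ with $\sigma_\Lambda^2=\mathrm{id}$; tracking labels shows $\sigma_\Lambda^\square$ also has no fixed vertex, and one reads off from Remark~\ref{rem;inv-aut}(c) that $\sigma_\Lambda$ is induced by a self-duality of $G$. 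Running the first two steps backwards then produces $-\widehat G=\widehat G^*$.

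I expect the main obstacle to be bookkeeping rather than a single hard step. The delicate points are: (a) making sure $\sigma_\Lambda$ really exchanges $G$ and $G^*$ and is not merely an automorphism of $I(G)$ preserving its two colour classes — here I would lean on Remark~\ref{rem;inv-aut}(c) together with the fact that the three vertex types $V_V$, $V_E$, $V_F$ of $I(G)^\square$ have pairwise distinct cardinalities, so every automorphism of $I(G)^\square$ respects this partition; (b) that ``no fixed point of $\widehat\sigma$ on $\stw$'' is genuinely equivalent to ``no fixed vertex of $\sigma^\square$'', which is Lemma~\ref{lem:pointsfixes}; and (c) checking that recovering $-\widehat G=\widehat G^*$ from the antipodal symmetry of $\widehat{I(G)^\square}$ is legitimate, i.e.\ that $I(G)^\square$ together with its black/white colouring determines $G$ up to isomorphism. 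None of these is deep, but each needs to be stated carefully.
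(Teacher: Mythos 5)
Your proposal is correct and follows essentially the same route as the paper: both directions hinge on Lemma~\ref{lem:pointsfixes} to translate between fixed points of $\widehat\sigma$ on $\stw$ and fixed vertices of $\sigma^\square$ in $I(G)^\square$, on the classification of involutive isometries of $\stw$ (only the antipodal map being fixed-point-free), and on the labeling--automorphism dictionary of Remark~\ref{rem;inv-aut}. Your explicit flagging of the delicate points (that $\sigma_\Lambda$ must swap the black and white classes, and that the realization must be recoverable from $I(G)^\square$) is a careful elaboration of steps the paper treats more tersely, not a different argument.
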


\begin{proof} Suppose that $G$ is antipodally self-dual. Therefore,  there is $\widehat{G}$ isomorphic to $G$ such that $-\widehat{G}=\widehat{G}^*$. Let $a:x\mapsto-x$ be the antipodal mapping of $\stw$.  We have that $a$ naturally induces the automorphisms $a_I\in Aut(I(\widehat{G}))$ and $a^\square\in Aut(I(\widehat{G})^\square)$ . Furthermore, since $a$ is the antipodal mapping then
\begin{itemize}
\item $a_I^2=Id$ (implying that $I(G)^\square$ admits an involutive labeling on its vertices) and
\item $a_I$ has no fixed points of $\stw$. Therefore, by Lemma \ref{lem:pointsfixes}, $a^\square$ has no fixed vertices and thus the above involutive labeling of $I(G)^\square$ has no fixed vertices.
\end{itemize}
We finally notice that an involutive labeling of $I(\widehat{G})^\square$ is also an involutive labeling of $I(G)^\square$.
\smallskip

Conversely, suppose that $I(G)^\square$ admits an involutive labeling without fixed vertices.
By Lemma \ref{lem:pointsfixes}, $\widehat \sigma(I(G))$ has not a fixed point in $\stw$. Now, there are three sphere isometries such that $\sigma^2=Id$ : rotation of $\pi$ degree, reflexion on a hyperplane and the antipodal function. Among them, it is the antipodal function the only without fixed points. Moreover, since  $\sigma : G\rightarrow G^*$ then $\sigma$ sends vertices of $G$ to vertices of $G^*$. Therefore, $G$ is antipodally self-dual.
 \end{proof}
For the involutive labelings of squares graphs,, we shall use integers (and their opposites) for vertices of type $V_V$, letters (and their opposites) for vertices of type $V_F$ and greek letters (and their opposites) for vertices of type $V_E$.
On one hand Figure \ref{fig:40} illustrates a self-dual map $G$ and $I(G)^\square$ together with an involutive labeling without fixed vertices. Therefore, as a consequence of Theorem \ref{thm;fixedpoint}, $G$ is antipodally self-dual. On the other hand, Figure \ref{fig32a} illustrates an involutive labeling of the 4-wheel  $W_4$ with $I(W_4)^\square$ admitting two fixed vertices. In fact, it can be checked that any involutive labeling of $I(W_4)^\square$ admits at least one fixed vertex since $W_4$ is not antipodally self-dual (see Proposition \ref{prop;wheel}).  
\begin{figure}[H]
    \centering
    \includegraphics[width=0.89\textwidth]{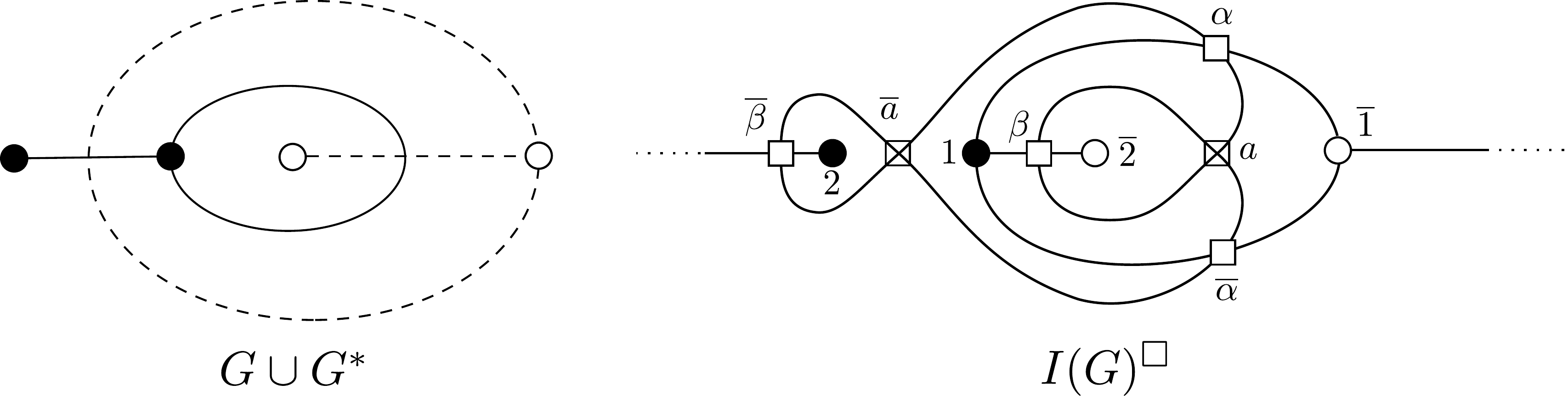}
    \caption{(Left) A self-dual map $G$ (straight edges and black vertices) and $G^*$ (dashed edges and white vertices). It can easily be checked that $G$ do not admit a strongly involutive isomorphism. (Right) An involutive labeling of $I(G)^\square$ without fixed vertices.}
    \label{fig:40}
\end{figure}
\begin{figure}[H]
\centering
\includegraphics[width=.9\linewidth]{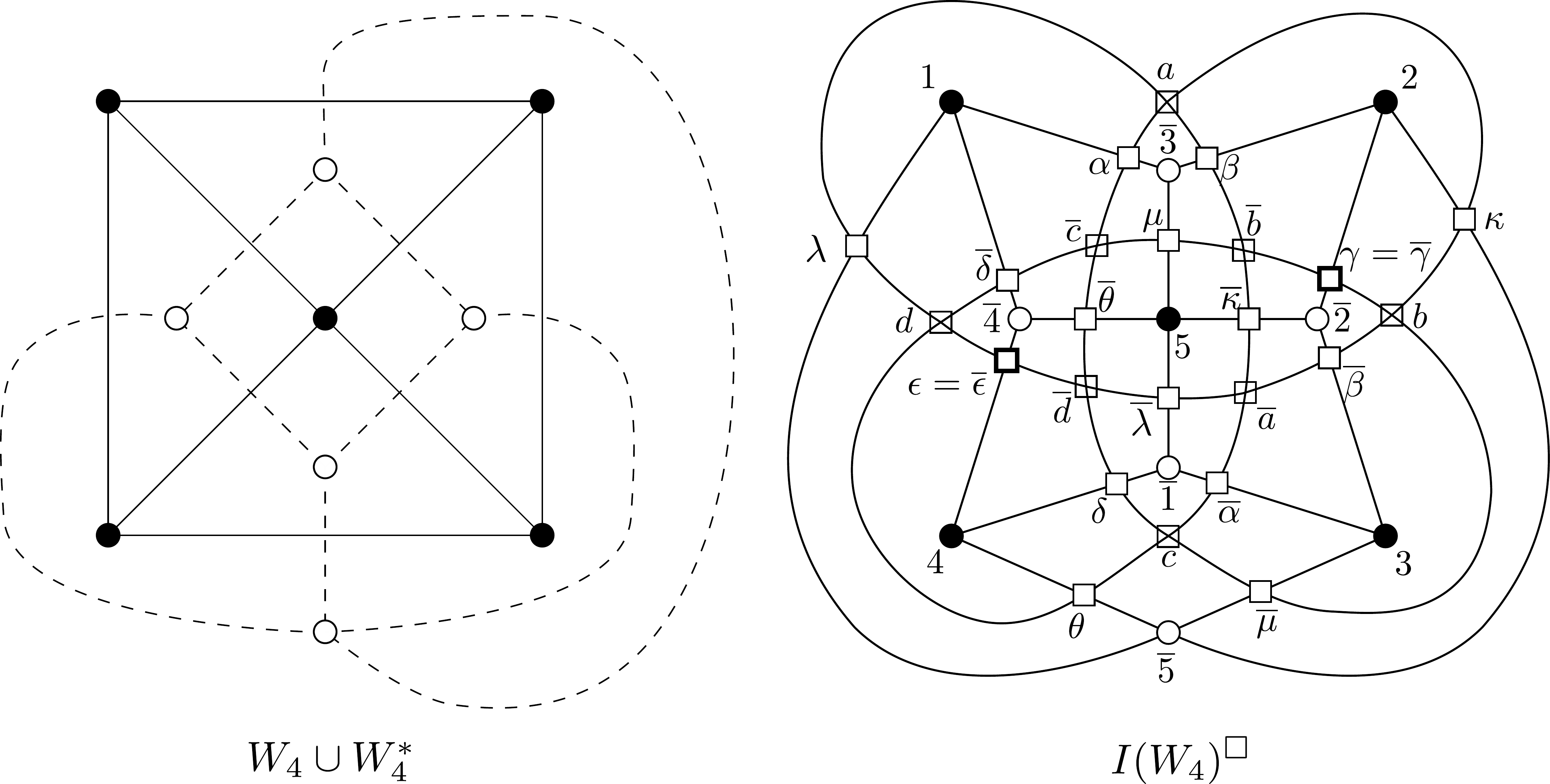}
\caption{(Left) The self-dual map $W_4$ with its dual.
(Right) $I^\square(W_4)$ together with an involutive labelling with two fixed vertices de type $V_E$: $\gamma=\bar \gamma$ and $\epsilon=\bar\epsilon$ (bold squares).}
\label{fig32a}
\end{figure}

\begin{corollary}\label{cor;nodual-ant} Let $G$ be a self-dual map. If there is a black vertex of $I(G)$ connected to each white vertex of $I(G)$ by an odd number of edges then $G$ is not antipodally self-dual.
\end{corollary}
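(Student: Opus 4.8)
The plan is to argue by contradiction: assume $G$ is antipodally self-dual and derive a contradiction with the hypothesis that some black vertex $b$ of $I(G)$ is joined to every white vertex of $I(G)$ by an odd number of edges. By Theorem~\ref{thm;fixedpoint}, antipodal self-duality gives an involutive labeling $\Lambda$ of $I(G)^\square$ without fixed vertices; equivalently, by Remark~\ref{rem;inv-aut}, an involutive duality isomorphism $\sigma=\sigma_\Lambda:G\to G^*$ with $\sigma^2=\mathrm{Id}$, which on $I(G)$ acts as a fixed-point-free automorphism pairing each black vertex with a white vertex and vice versa.

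First I would examine the multiplicity data. In the map $I(G)$, the number of edges joining a black vertex $v\in V(G)$ to a white vertex $f^\ast\in V^\ast(G)$ equals the number of times the face $f$ of $G$ is incident to the vertex $v$ along its boundary walk (an incidence diagonal of $G^\square$ is created once for each corner of a face at a vertex). So the hypothesis says: there is a vertex $b$ of $G$ which, for every face $f$ of $G$, lies on $\partial f$ an odd number of times. Now let $B=\sigma(b)$, a white vertex of $I(G)$, i.e.\ a face $f_b$ of $G$. Since $\sigma$ is a duality isomorphism it carries the edge-multiset between $b$ and a face $f$ to the edge-multiset between $B=f_b$ (now viewed as a vertex of $G^\ast$) and $\sigma(f)$ (a vertex of $G$); in particular the multiplicity of the pair $\{b,f\}$ in $I(G)$ equals the multiplicity of the pair $\{B,\sigma(f)\}$. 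Hence $B$ is a white vertex of $I(G)$ also joined to every black vertex by an odd number of edges — symmetrically, $f_b$ contains every vertex of $G$ an odd number of times on its boundary.

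The key step is then a parity count on the pair $\{b,B\}$ itself. Consider the edges of $I(G)$ incident to $b$: their number is $\sum_{f}(\text{multiplicity of }\{b,f\})$, a sum of $|F|$ odd numbers, so it has the parity of $|F|$; likewise $\deg_{I(G)}(B)$ has the parity of $|V|$. Now apply the fixed-point-free involution $\sigma$ to this picture: $\sigma$ pairs $b\leftrightarrow B$, and by property $(iv)$ of the involutive labeling it maps the edge-set of $I(G)^\square$ to itself, interchanging the star of $b$ with the star of $B$; in particular the edge of $I(G)$ realizing the incidence $\{b,f\}$ is matched with the edge realizing $\{B,\sigma(f)\}$. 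Counting edges of $I(G)$ whose two endpoints form a $\sigma$-orbit versus those swapped in pairs, together with the self-duality relation $|V|+|F|=|V^\ast|+|F^\ast|$ and Euler's formula ($|V|-|E|+|F|=2$, and $I(G)$ has $|V|+|F|$ vertices and $\sum_f \deg f=2|E|$ incidence-diagonals), pins down a parity obstruction: the assumption that $b$ is joined to all of $V^\ast(G)$ by odd multiplicities forces an odd number of $\sigma$-fixed vertices in $I(G)^\square$, contradicting that $\Lambda$ has none.

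I expect the main obstacle to be making the last parity bookkeeping airtight: one must track carefully how $\sigma$ acts on $I(G)^\square$ (not just on $I(G)$), because the fixed-vertex-free condition of Theorem~\ref{thm;fixedpoint} lives on $I(G)^\square$, whose vertex set $V_V\cup V_E\cup V_F$ includes the edge-midpoints; showing that an odd count of odd-multiplicity incidences at $b$ forces a fixed vertex of type $V_V$, $V_E$, or $V_F$ somewhere is exactly where the argument has to be handled with care. A cleaner alternative, which I would fall back on if the direct count gets unwieldy, is to deduce it from Theorem~\ref{theom;ant1}: show that the hypothesis produces in $I(G)$ a $\sigma$-invariant cycle through $b$ and $B$ of length $\equiv 2 \pmod 4$, i.e.\ a symmetric cycle of length $2n$ with $n$ even, which is forbidden. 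Either route reduces the corollary to results already proved in the excerpt.
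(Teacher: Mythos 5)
Your setup is the right one and matches the paper's: assume $G$ is antipodally self-dual, invoke Theorem~\ref{thm;fixedpoint} to get an involutive labeling of $I(G)^\square$ without fixed vertices, i.e.\ a fixed-point-free involution $\sigma=\sigma_\Lambda$ of $I(G)$ pairing black with white vertices, and look for a forced fixed vertex. You even record the decisive fact --- $\sigma$ swaps $b$ with $B=\sigma(b)$ and sends the edge realizing the incidence $\{b,f\}$ to the edge realizing $\{B,\sigma(f)\}$ --- but you then abandon it for a global parity count involving $\deg_{I(G)}(b)$, Euler's formula and the total number of incidence diagonals, which you yourself flag as not pinned down. That global count is where the genuine gap sits: nothing in it is actually shown to force an odd number of $\sigma$-fixed vertices of $I(G)^\square$; the conclusion is asserted, not derived, and it is not clear the bookkeeping would close.

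The missing step is purely local and needs none of that machinery. Since $b$ is joined to \emph{every} white vertex by an odd number of edges, it is in particular joined to $B=\sigma(b)$ by an odd number of parallel edges. Specializing your own observation to $f=B$: $\sigma$ maps each edge between $b$ and $B$ to an edge between $B$ and $\sigma(B)=b$, so the set of parallel edges joining $b$ and $B$ is $\sigma$-invariant and $\sigma$ acts on it as an involution. An involution on a set of odd cardinality fixes an element, so some edge of $I(G)$ with ends labeled $k$ and $\bar k$ is mapped to itself; its subdivision vertex is then a fixed vertex of type $V_E$ in $I(G)^\square$, contradicting Theorem~\ref{thm;fixedpoint}. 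This is exactly the paper's proof. Your fallback route via Theorem~\ref{theom;ant1} (producing a symmetric cycle of length $2n$ with $n$ even) is likewise only sketched and is not needed.
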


\begin{proof}Let $v$ be such a black vertex. Since $v$ is connected to all the white vertices 
then for any involutive labeling $\Lambda$ of $I(G)^\square$ there is an edge in $I(G)$ with ends labeled with $k$ and $\overline{k}$. By Remark \ref{rem;inv-aut}, the automorphism $\sigma_\Lambda(G)$ maps an edge with ends labeled $\{k,\overline{k}\}$ to an edge with ends labeled  $\{k,\overline{k}\}$. Since, by hypothesis, there is an odd number of edges then there must be an edge mapped to itself which correspond to a fixed vertex in $V(I(G)^\square)$.  Therefore, by Theorem \ref{thm;fixedpoint}, $G$ is not antipodally self-dual.
\end{proof}

Figure \ref{fig32b} illustrates a graph in which $I(G)$ has a vertex 
in $V(G)$ adjacent to  each vertex of $G^*$ by an odd number of edges (and thus, by Corollary \ref{cor;nodual-ant}, $G$ is not antipodally self-dual).

\begin{figure}[H]
\centering
\includegraphics[width=.6\linewidth]{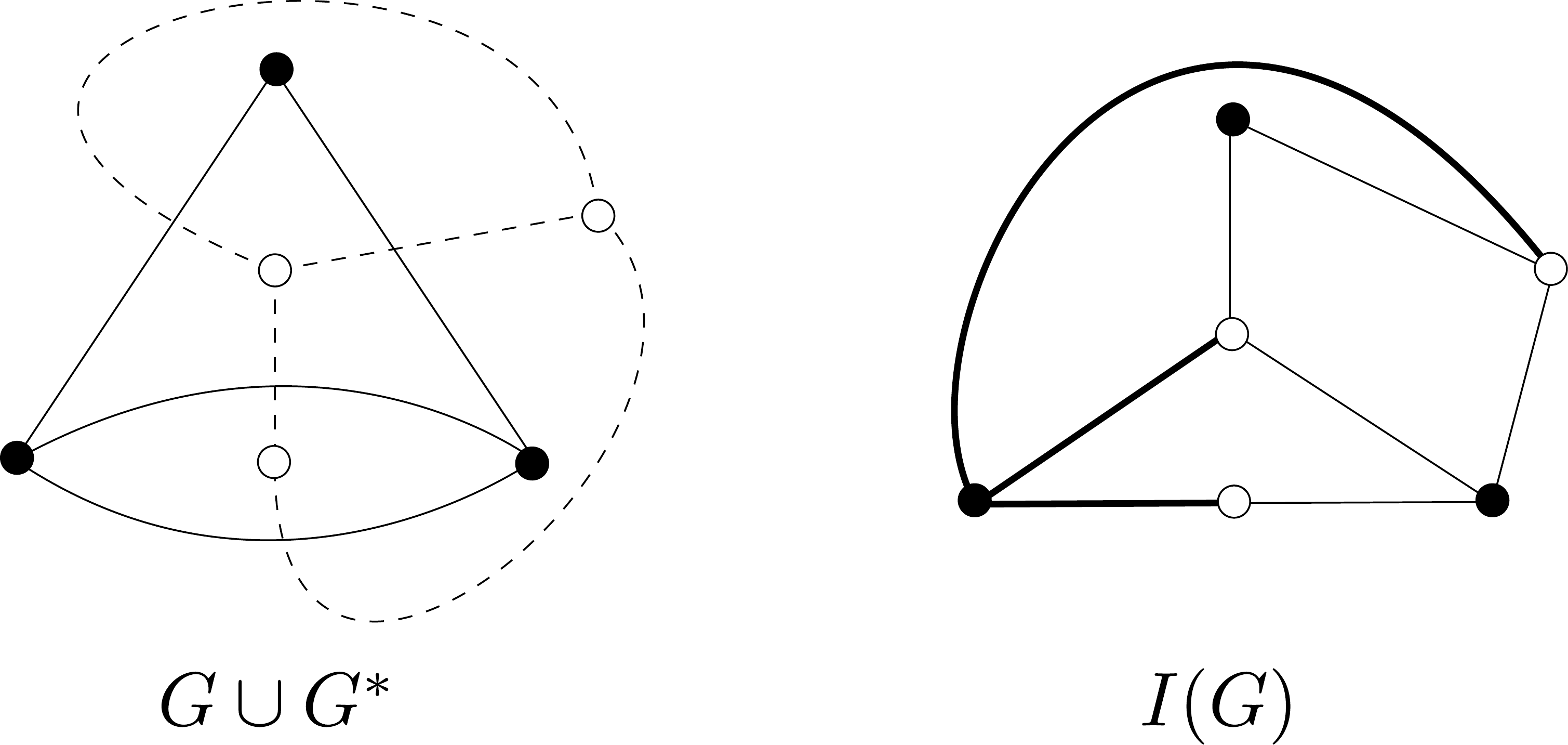}
\caption{(Left) A self-dual map $G$ with its dual. (Right) $I(G)$ with a black vertex joined to each white vertex by an odd number of edges (in bold).}
\label{fig32b}
\end{figure}

\begin{corollary}\label{cor;stronginv} Let $G$ be a self-dual map. If $G$ is strongly involutive then $G$ is antipodally self-dual.
\end{corollary}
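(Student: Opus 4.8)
The plan is to derive the corollary directly from Theorem~\ref{thm;fixedpoint}: since $G$ is self-dual, it suffices to exhibit an involutive labeling of $I(G)^\square$ with no fixed vertex, and the natural candidate will come from the strongly involutive duality isomorphism itself. So I would start with a duality isomorphism $\sigma\colon G\to G^*$ with $\sigma^2=\mathrm{id}$ and $v\notin\sigma(v)$ for every $v\in V(G)$. This $\sigma$ induces an automorphism $\sigma_I$ of $I(G)$ interchanging the two colour classes $V(G)$ and $V(G^*)$ and satisfying $\sigma_I^2=\mathrm{id}$ (cf.\ Remark~\ref{rem;inv-aut} and the remark preceding it), hence, by Remark~\ref{rem;sigmaSqr}, an automorphism $\sigma_I^\square$ of $I(G)^\square$ of order $2$ that respects the partition $V_V\cup V_E\cup V_F$. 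Labeling each orbit of $\sigma_I^\square$ by a pair $\{k,\overline k\}$ — placing $k$ on the $V(G)$-vertex and $\overline k$ on the $V(G^*)$-vertex for each orbit contained in $V_V$ — produces an involutive labeling $\Lambda$ of $I(G)^\square$ whose induced map $\sigma_\Lambda$ is exactly $\sigma_I$, hence a duality isomorphism; conditions $(i)$–$(iv)$ for $\Lambda$ are then immediate, with $(iv)$ holding because $\sigma_I^\square$ is an automorphism. The whole proof thus reduces to a single claim: $\sigma_I^\square$ has no fixed vertex.

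By Lemma~\ref{lem:pointsfixes} applied to $H=I(G)$ — equivalently, because $\sigma_I^\square$ acts on $V_V$, $V_E$ and $V_F$ through the action of $\sigma_I$ on the vertices, edges and faces of $I(G)$ — this claim amounts to showing that $\sigma_I$ fixes no vertex, no edge and no face of $I(G)$. The first is automatic, since $\sigma_I$ swaps $V(G)$ and $V(G^*)$. For the second, an edge of $I(G)$ is an incidence pair $\{v,f^*\}$ with $v\in V(G)$ incident to the face $f$ of $G$; if $\sigma_I$ fixed it we would have $\sigma_I(v)=f^*$, i.e.\ $\sigma(v)=f$, so $v$ is incident to $\sigma(v)$, contradicting $v\notin\sigma(v)$. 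For the third — where the real work lies — I would use the combinatorial description of the faces of $I(G)$ (equivalently, of the vertices of $med(G)$): each face surrounds one point of $X(G,G^*)$, namely the one lying on an edge $e=\{v_1,v_2\}$ of $G$, and it is the quadrilateral with vertices $v_1,f_1^*,v_2,f_2^*$, where $f_1,f_2$ are the two faces of $G$ incident to $e$. If $\sigma_I$ fixed such a face it would have to send $\{v_1,v_2\}$ onto $\{f_1^*,f_2^*\}$, so $\sigma(v_1)\in\{f_1,f_2\}$; but $v_1$, being an endpoint of $e$, is incident to both $f_1$ and $f_2$, so once again $v_1$ is incident to $\sigma(v_1)$, contradicting condition b).

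Once these three sub-claims are established, $\sigma_I^\square$ has no fixed vertex, the labeling $\Lambda$ above is an involutive labeling of $I(G)^\square$ without fixed vertices, and Theorem~\ref{thm;fixedpoint} gives that $G$ is antipodally self-dual. I expect the main obstacle to be the ``no fixed face'' case: it requires pinning down the exact structure of the quadrilateral faces of $I(G)$ and observing that the endpoints of an edge of $G$ are incident to both of its adjacent faces, which is precisely what converts condition b) into an obstruction — note that condition b) and the swapping of colour classes together kill \emph{vertices} and \emph{edges}, but only the face count genuinely needs this extra incidence remark. A secondary point to check carefully is that the orbit labeling really satisfies $(i)$–$(iv)$ and corresponds to a duality isomorphism, so that the conclusion of Theorem~\ref{thm;fixedpoint} applies; this is routine given that $\sigma_I$ interchanges $V(G)$ and $V(G^*)$ and $\sigma_I^\square$ is an automorphism.
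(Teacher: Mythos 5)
Your proposal is correct and follows essentially the same route as the paper: reduce via Theorem~\ref{thm;fixedpoint} to showing that the involution induced by $\sigma$ on $I(G)^\square$ has no fixed vertex, and check the three types $V_V$, $V_E$, $V_F$ separately, with the quadrilateral structure of the faces of $I(G)$ plus condition b) ruling out fixed faces. Your write-up of the fixed-face case is in fact cleaner than the paper's (which contains a typo in the displayed set equalities), but the underlying argument is identical.
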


\begin{proof}  We shall show that $I(G)^\square$ admits an involutive labeling without fixed vertices. The result then follows by Theorem \ref{thm;fixedpoint}. 

Let $\sigma : G\longrightarrow G^*$ be a duality isomorphism. We thus have that $\sigma$ does not fix vertices. Recall that if $G$ is strongly involutive then $\sigma$ verifies 
\smallskip

a) for each pair of vertices $u,v\in V(G), u\in \sigma(v)$ if and only if $v\in\sigma(u)$ and
\smallskip

b) for every vertex $v\in V(G)$, we have that $v\not\in\sigma(v)$.

As remarked above, a) is equivalent to say that $\sigma^2=id$.
We clearly have that $\sigma$ does not fix vertices since it maps vertices of $G$ to vertices of $G^*$. 
The latter implies that $\sigma_I$ does not fix vertices in $I(G)$ and thus neither $\sigma^\square$ in $I(G)^\square$.
\smallskip

Now, by combining conditions (a) and (b) we obtain that $u\not\in\sigma (u)$ for every vertex $u$ in $G^*$. The latter implies that $I(G)$ does not admit an edge with extremes labeled with
 $k$ and $\bar k$ and so $\sigma^\square$ does not fix vertices of type $V_E$ (i.e., arising from edges of $I(G)$) in $I(G)^\square$.
\smallskip

We finally claim that $\sigma^\square$ does not fix vertices of type $V_F$ (i.e., arising from faces of $I(G)$) in $I(G)^\square$. We proceed by contradiction, suppose that  $\sigma\square$ fixes a vertex $u_f$ arising from a face $f$ of $I(G)$. Let $f$ be the face in $I(G)$ corresponding to $\sigma(u_f)$. 
Recall that all the faces in $I(G)$ are squares, suppose that $f=\{w,x,y,z\}$ with $w, y\in V(G)$ and $x, z\in V(G^*)$ and $f'=\{w',x',y',z'\}$ with $w', y'\in V(G)$ and $x', z'\in V(G^*)$.
\smallskip

Since $\sigma^\square$ fixes $u_f$ then $\sigma(u_f)=u_f$ but this happen only if $\{\sigma(w),\sigma(y)\}=\{w',y')\}$ and $\{\sigma(x),\sigma(z)\}=\{x',z')\}$. The latter implies the existence of an edge with extremes labeled $k$ and $\bar k$, which is not possible.
\end{proof}

We notice that the converse of Corollary \ref{cor;stronginv} is not necessarily true. Indeed, there might be a non strongly involutive map $G$ with $I(G)^\square$ admitting an involutive labeling without fixed vertices (and thus $G$ antipodally self-dual, by Theorem \ref{thm;fixedpoint}), see Figure \ref{fig:40}.

\section{Infinite families}\label{sec:ex}

We give below some infinite families having antipodally self-dual maps. For, it is given an appropriate strongly involutive duality-isomorphism.  We will present a result giving sufficient and necessary conditions for a map to be  antipodally self-dual  in Section \ref{subsec;isomdual-ant} (Theorem \ref{thm;fixedpoint}) which can also be used to verify that the below families are antipodally self-dual. The latter is based on {\em involutive isometries} in $\mathbb{S}^2$ without fixed points.  

\subsection{The wheel} Let $n\ge 3$ be an integer. The $n$-{\em wheel}, denoted by $W_n$, is the graph consisting of an $n$-cycle with a center joined to each vertex of the cycle. 

\begin{proposition}\label{prop;wheel} The $n$-wheel is antipodally self-dual if and only if $n\ge 3$ is odd.
\end{proposition}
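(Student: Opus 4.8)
The statement has two directions. For the "if" direction ($n$ odd $\Rightarrow W_n$ antipodally self-dual) I would exhibit an explicit strongly involutive duality isomorphism $\sigma : W_n \to W_n^*$ and invoke Corollary \ref{cor;stronginv}. Recall that $W_n^* \cong W_n$: the hub $c$ and the outer cycle $v_0,\dots,v_{n-1}$ are dual to the unbounded-type central face $c^*$ and the $n$ triangular faces $f_0,\dots,f_{n-1}$, where $f_i$ is bounded by $c, v_i, v_{i+1}$ (indices mod $n$). For $n$ odd I would set $\sigma(c) = c^*$ (the big face), $\sigma(c^*)$ = the hub (so $\sigma$ swaps hub and the face "opposite" to it), and then need a fixed-point-free involution on the outer $n$-cycle pairing each rim vertex $v_i$ with a rim face $f_j$ so that incidences are reversed and $\sigma^2 = \mathrm{id}$. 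Since $n$ is odd, the map $v_i \mapsto f_{i + (n-1)/2}$ type of shift gives an involution: one checks $v_i \in \sigma(v_j) \iff v_j \in \sigma(v_i)$ and $v_i \notin \sigma(v_i)$ because the relevant shift on $\mathbb{Z}/n$ has no fixed point when $n$ is odd. Then I verify $\sigma$ reverses inclusion (hub incident to every rim vertex $\leftrightarrow$ big face incident to every rim face; $v_i$ incident to faces $f_{i-1}, f_i, c^*$ $\leftrightarrow$ ...), so $\sigma$ is a strongly involutive self-duality and Corollary \ref{cor;stronginv} finishes this direction.

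**The "only if" direction** ($n$ even $\Rightarrow W_n$ not antipodally self-dual) is where I expect the real work, and I would do it via Theorem \ref{theom;ant1} or Corollary \ref{cor;nodual-ant}. The cleanest route is Corollary \ref{cor;nodual-ant}: I claim the hub $c$, viewed as a black vertex of $I(W_n)$, is joined to each white vertex of $I(W_n)$ by an odd number of edges. The white vertices of $I(W_n)$ are $c^*$ and $f_0,\dots,f_{n-1}$. In $I(W_n)$, the number of edges between a vertex $u \in V$ and a vertex $w^* \in V^*$ equals the number of distinct corners (flags) where the face $w$ touches the vertex $u$. The hub $c$ lies on every triangular face $f_i$ at exactly one corner, so there is exactly one edge $c$–$f_i$ — odd. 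And $c$ does not lie on the big face $c^*$ at all, giving zero edges — but zero is even, so this does not immediately work, and I would instead check whether the statement "connected to each white vertex by an odd number of edges" is literally met, or fall back to Theorem \ref{theom;ant1}.

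**Fallback via symmetric cycles.** If the $I(W_n)$-degree argument needs adjustment, I would instead produce, for $n$ even, a symmetric cycle in $I(W_n)$ of length not of the form $2m$ with $m$ odd, and apply the "moreover" part of Theorem \ref{theom;ant1}. Concretely, $I(W_n)$ contains the 4-cycle $c, f_i, v_{i+1}, f_{i+1}$ — wait, that needs care; more robustly, the cycle alternating hub-corner structure around $c$ gives a natural cycle through $c$ and the $f_i$'s of length $2n$, and there is a reflection-type automorphism of $W_n$ fixing this cycle and swapping its two sides; when $n$ is even, $2n = 2m$ with $m = n$ even, violating the parity condition of Theorem \ref{theom;ant1}. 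The main obstacle is making the "symmetric cycle" claim precise: I must name an explicit automorphism $\tau$ of $I(W_n)$ with $\tau(C) = C$ and $\tau(\mathrm{int}\,C) = \mathrm{ext}\,C$, which amounts to checking that $W_n$ has an automorphism interchanging the hub with the central face's "role" — i.e. that lifts to $I(W_n)$ swapping inside and outside of $C$ — and this is exactly where the parity of $n$ enters. I would organize the write-up as: (1) describe $W_n$, $W_n^*$ and the identification; (2) $n$ odd: give $\sigma$, check (a),(b), apply Corollary \ref{cor;stronginv}; (3) $n$ even: identify the obstructing symmetric cycle or the odd-incidence vertex in $I(W_n)$ and apply Theorem \ref{theom;ant1} (resp. Corollary \ref{cor;nodual-ant}).
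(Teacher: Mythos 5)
Your proposal follows essentially the same route as the paper: for odd $n$ the paper likewise exhibits a strongly involutive duality isomorphism (your choice $c\mapsto c^*$, $v_i\mapsto f_{i+(n-1)/2}$ is the correct one) and concludes via Corollary \ref{cor;stronginv}, and for even $n$ it likewise produces a symmetric cycle of length $2n$ in $I(W_n)$ and applies Theorem \ref{theom;ant1} (your first attempt via Corollary \ref{cor;nodual-ant} indeed fails, since the hub meets the outer face in zero corners). The only repair needed in your even case is to name the cycle correctly: it is the alternating cycle $v_0,f_0,v_1,f_1,\dots,v_{n-1},f_{n-1}$ (it does not pass through the hub $c$), it is preserved by the duality automorphism of $I(W_n)$ which swaps its interior $\{c\}$ with its exterior $\{c^*\}$, and it exists for every $n$ --- the parity of $n$ enters only through the theorem's constraint on the length $2n$, not through the existence of the swapping automorphism.
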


\begin{proof} It can be easily checked that $W_n$ admits a strongly involutive duality-isomorphism for any odd integer $n\ge 3$, see Figure \ref{fig18aa}. Moreover, if $n$ is even then $I(W_n)$ admits a symmetric cycle of length $2k$ with $k$ even, see Figure \ref{fig18a2}. Thus, by Theorem \ref{theom;ant1}, $W_n$ is not antipodally self-dual. 
\end{proof}

\begin{figure}[H]
\centering
\includegraphics[width=0.45\linewidth]{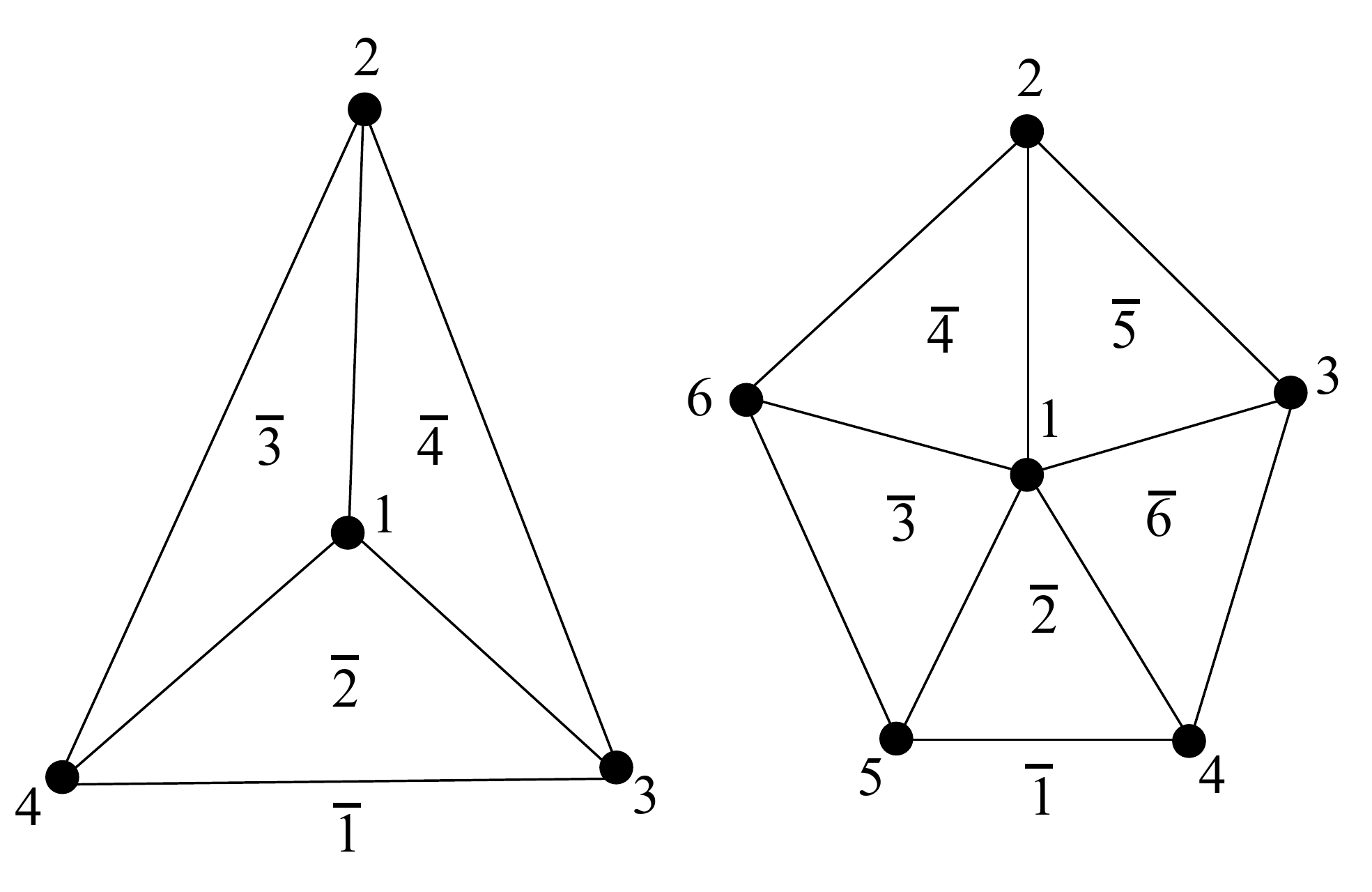}
\caption{3-wheel and 5-wheel together with a strongly involutive duality-isomorphism given by $\sigma(k)=\bar k$.}
\label{fig18aa}
\end{figure}

\begin{figure}[H]
\centering
\includegraphics[width=0.26\linewidth]{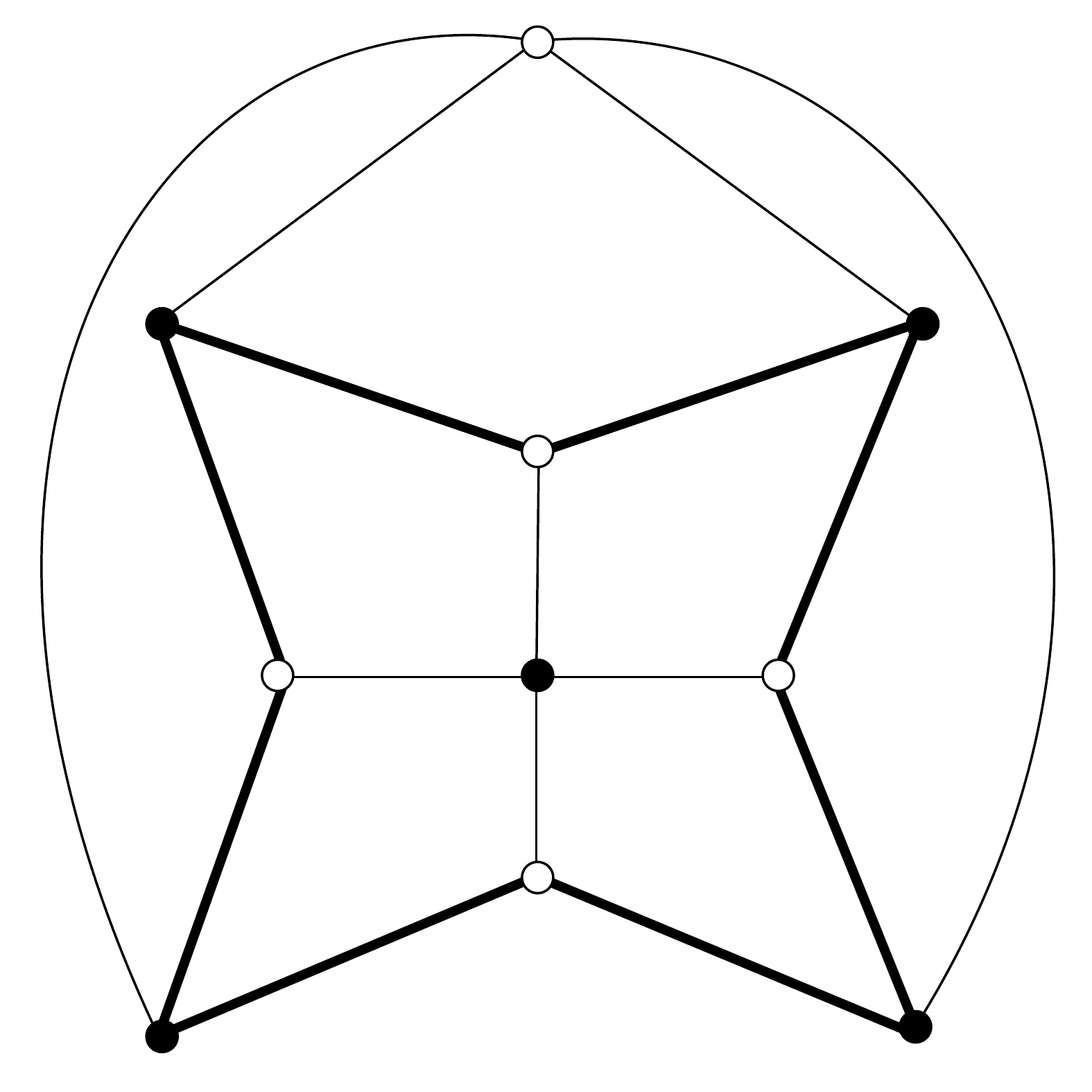}
\caption{$I(W_4)$ admitting a symmetric cycle (bold edges) of length 8.}
\label{fig18a2}
\end{figure}

Figure \ref{fig13} (a) shows that $W_3$ admits a antipodally self-dual map. One can easily mimic this embedding for any  odd integer $n\ge 3$. Figure \ref{fig18} illustrates the case $n=5$.

\begin{figure}[H]
\centering
\includegraphics[width=0.4\linewidth]{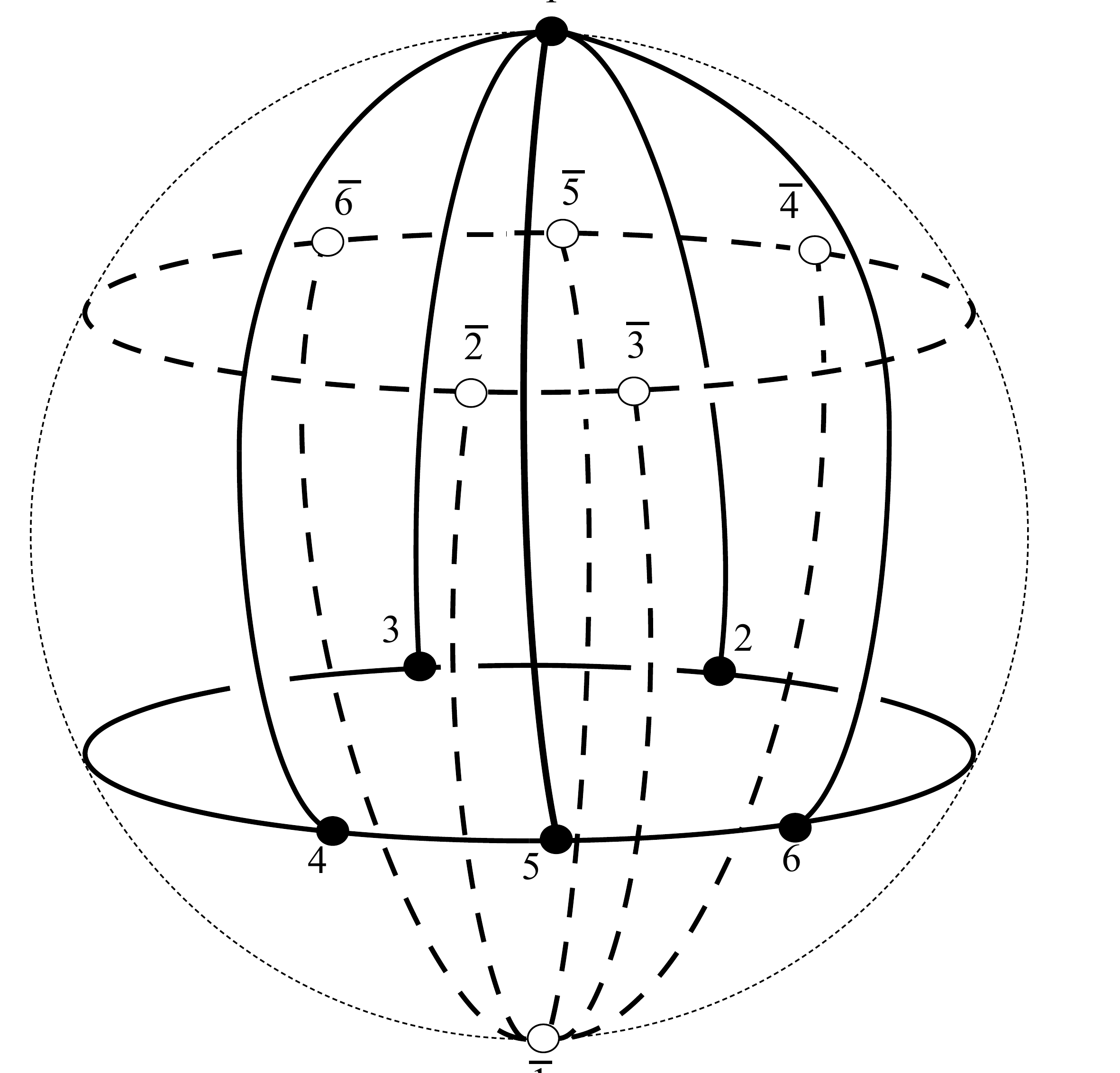}
\caption{A antipodally self-dual map of $W_5$ (straight edges and black vertices) and its dual (dashed edges and white vertices). Antipodal vertices are given by $k$ and $\bar k$}
\label{fig18}
\end{figure}

\subsection{The $n$-ear} Let $n\ge 3$ be an integer. The $n$-{\em ear}, denoted by $E_n$ is the graph consisting of a $n$-cycle with an {\em ear} added on each edge and a center is joined to each ear, see Figure  \ref{fig18a}
\smallskip

\begin{proposition}\label{prop;ear} The $n$-ear is antipodally self-dual if and only if $n\ge 4$ is even.
\end{proposition}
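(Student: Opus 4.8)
The plan is to mirror the structure of the proof of Proposition \ref{prop;wheel}: exhibit an explicit strongly involutive duality-isomorphism for the even cases (which yields antipodal self-duality by Corollary \ref{cor;stronginv}), and rule out the odd cases via the parity obstruction of Theorem \ref{theom;ant1} by locating a symmetric cycle of the wrong length in $I(E_n)$. First I would fix notation for $E_n$: label the vertices of the base $n$-cycle $v_1,\dots,v_n$, the ear added on edge $v_iv_{i+1}$ introduces a new vertex $w_i$ adjacent to $v_i$ and $v_{i+1}$ (so $v_i,v_{i+1},w_i$ bound a triangular face), and the center $c$ is joined to every $w_i$. One then computes the face set of $E_n$ and checks that $E_n$ is self-dual; the natural candidate duality $\sigma$ sends the center $c$ to the outer face, each $w_i$ to one of the two "big" faces incident to the $v_i$'s, and each $v_i$ to a triangular face — this is where the parity of $n$ first enters, because the cyclic pattern of faces around the rim only closes up consistently (with $\sigma^2=\mathrm{id}$) when $n$ is even.

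For the "if" direction ($n\ge 4$ even), I would simply present the labeling $\sigma(k)=\bar k$ on the antipodal embedding, as in Figure \ref{fig18a}, and verify conditions (a) and (b) of strong involutivity: (a) is the statement $\sigma^2=\mathrm{id}$, which follows from the explicit pairing of vertices with faces, and (b) $v\notin\sigma(v)$ is checked case by case (a rim vertex $v_i$ is not on its image triangle, a $w_i$ is not on its image face, and $c$ is not on the outer face). Because $n$ is even, the two "polar" vertices — the center $c$ and the $w_i$ (or the pair of $w$'s) sitting opposite it — get matched to each other rather than being forced to be fixed, so no fixed vertex arises and Corollary \ref{cor;stronginv} applies. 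Concretely I would build the antipodal map so that the antipodal involution $a\colon x\mapsto -x$ of $\stw$ realizes $\sigma$, pairing $v_i\leftrightarrow$ (triangle opposite it), $w_i\leftrightarrow$ (big face opposite it), $c\leftrightarrow$ outer face.

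For the "only if" direction ($n\ge 3$ odd), I would exhibit in $I(E_n)$ a symmetric cycle whose length is $2k$ with $k$ even, so that Theorem \ref{theom;ant1} forbids antipodal self-duality. The candidate is the cycle in the vertex-face incidence graph alternating between the rim vertices $v_1,\dots,v_n$ and the triangular faces they bound, which already has even "length $2$-type" structure; alternatively one uses the cycle through $c$, the $w_i$'s, and the big faces. The automorphism exhibiting symmetry is the obvious dihedral rotation of $E_n$, and one checks that with $n$ odd the interior and exterior of this cycle are swapped by that rotation while the resulting length $2k$ has $k$ of the forbidden parity. The main obstacle — and the step I would spend the most care on — is the bookkeeping of the face structure: correctly identifying which faces of $E_n$ are triangles versus large faces, writing down $\sigma$ so that $\sigma^2=\mathrm{id}$ genuinely holds (this is delicate at the "equator" of the rim, where an off-by-one in the indexing destroys involutivity), and pinning down the exact length of the symmetric cycle in $I(E_n)$ as a function of $n$ so that its parity is unambiguous. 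Once the combinatorics of $E_n$ and its dual are laid out cleanly, both directions reduce to inspection of a figure plus the already-proved Corollary \ref{cor;stronginv} and Theorem \ref{theom;ant1}.
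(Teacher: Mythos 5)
Your overall strategy is exactly the paper's: for even $n$ exhibit a strongly involutive duality-isomorphism (the paper points to Figure \ref{fig18a} and, implicitly, Corollary \ref{cor;stronginv}), and for odd $n$ exhibit a symmetric cycle in $I(E_n)$ of length $2k$ with $k$ even and invoke Theorem \ref{theom;ant1} (the paper points to Figure \ref{fig18a21}). The ``if'' half of your proposal is fine modulo the routine verification you describe.

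There is, however, a concrete problem in your ``only if'' direction. Theorem \ref{theom;ant1} only gives an obstruction when the symmetric cycle you find has length $2k$ with $k$ \emph{even}; a symmetric cycle of length $2k$ with $k$ odd is exactly what the theorem permits and rules out nothing. Your primary candidate --- the cycle in $I(E_n)$ alternating the $n$ rim vertices $v_1,\dots,v_n$ with the $n$ triangular faces they bound --- has length $2n$, and when $n$ is odd this is of the \emph{allowed} parity, so it cannot be fed into Theorem \ref{theom;ant1}. (It is also not clear that this cycle is symmetric at all: the side containing $c$, the $w_i$'s, the quadrilateral faces and the outer face is much larger than the other side.) The cycle the paper uses for $I(E_3)$ has length $12=2\cdot 6$, i.e.\ length divisible by $4$, and necessarily threads through vertices of both colours beyond the rim/triangle alternation; moreover the automorphism certifying its symmetry must swap the two leftover vertices (which are of opposite colours, e.g.\ the centre and the outer face), so it is induced by a duality of $E_n$ rather than by the ``obvious dihedral rotation'' you invoke. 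To complete your argument you would need to write down, for general odd $n$, an explicit cycle of length divisible by $4$ together with a duality-induced automorphism of $I(E_n)$ exchanging its interior and exterior; as it stands, the step you yourself flag as delicate is the one that is actually missing.
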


\begin{proof} It can be easily checked that $E_n$ admits a strongly involutive duality-isomorphism for any even integer $n\ge 4$, see Figure \ref{fig18a}. Moreover, if $n$ is odd then $I(E_n)$ admits a symmetric cycle of length $2k$ with $k$ even, see Figure \ref{fig18a21}. Thus, by Theorem \ref{theom;ant1}, $W_n$ is not antipodally self-dual. 
\end{proof}

\begin{figure}[H]
\centering
\includegraphics[width=0.7\linewidth]{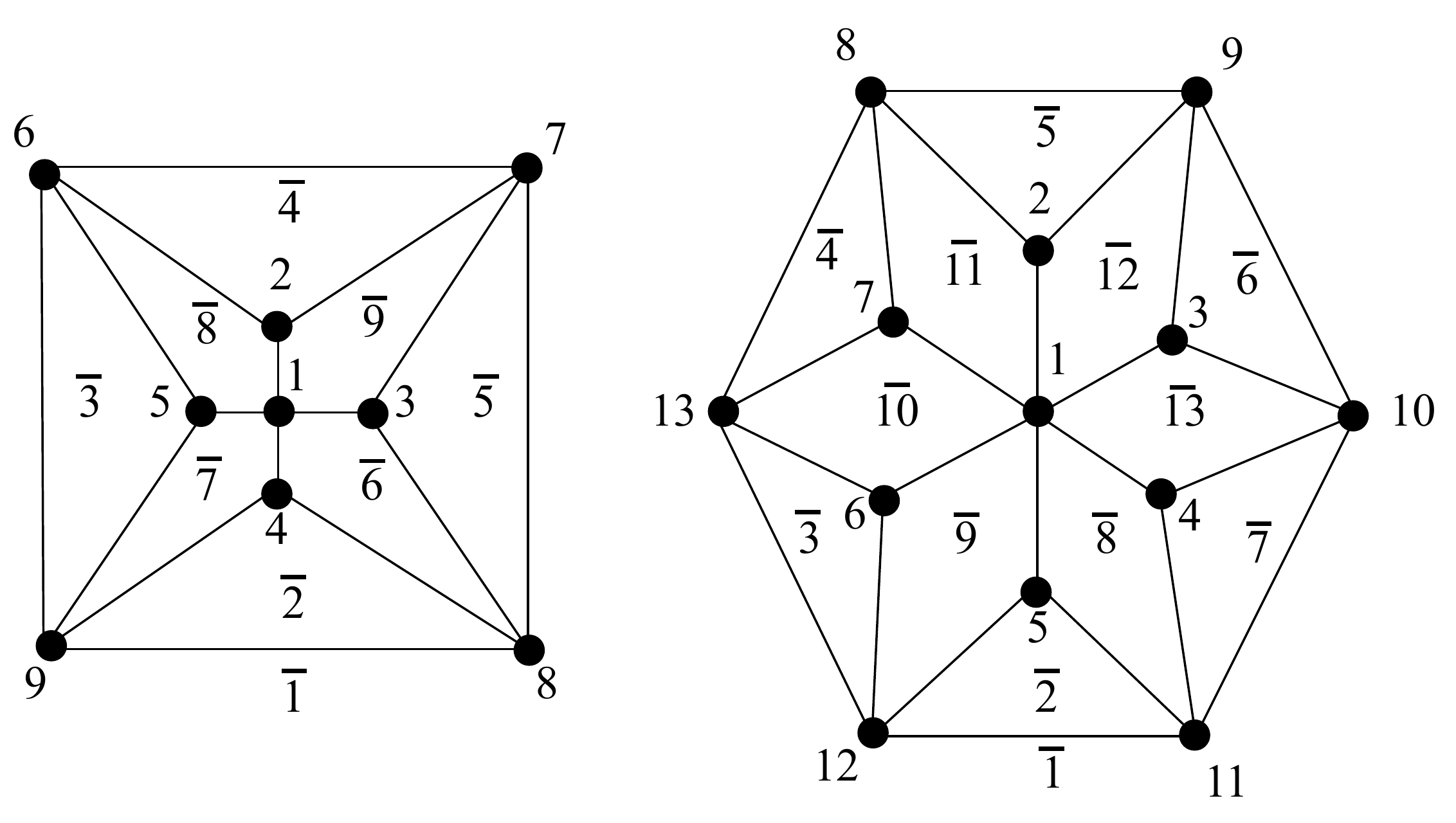}
\caption{The 4-ear and 6-ear graphs together with strongly involutive duality-isomorphisms given by $\sigma(k)=\bar k$.}
\label{fig18a}
\end{figure}

\begin{figure}[H]
\centering
\includegraphics[width=0.32\linewidth]{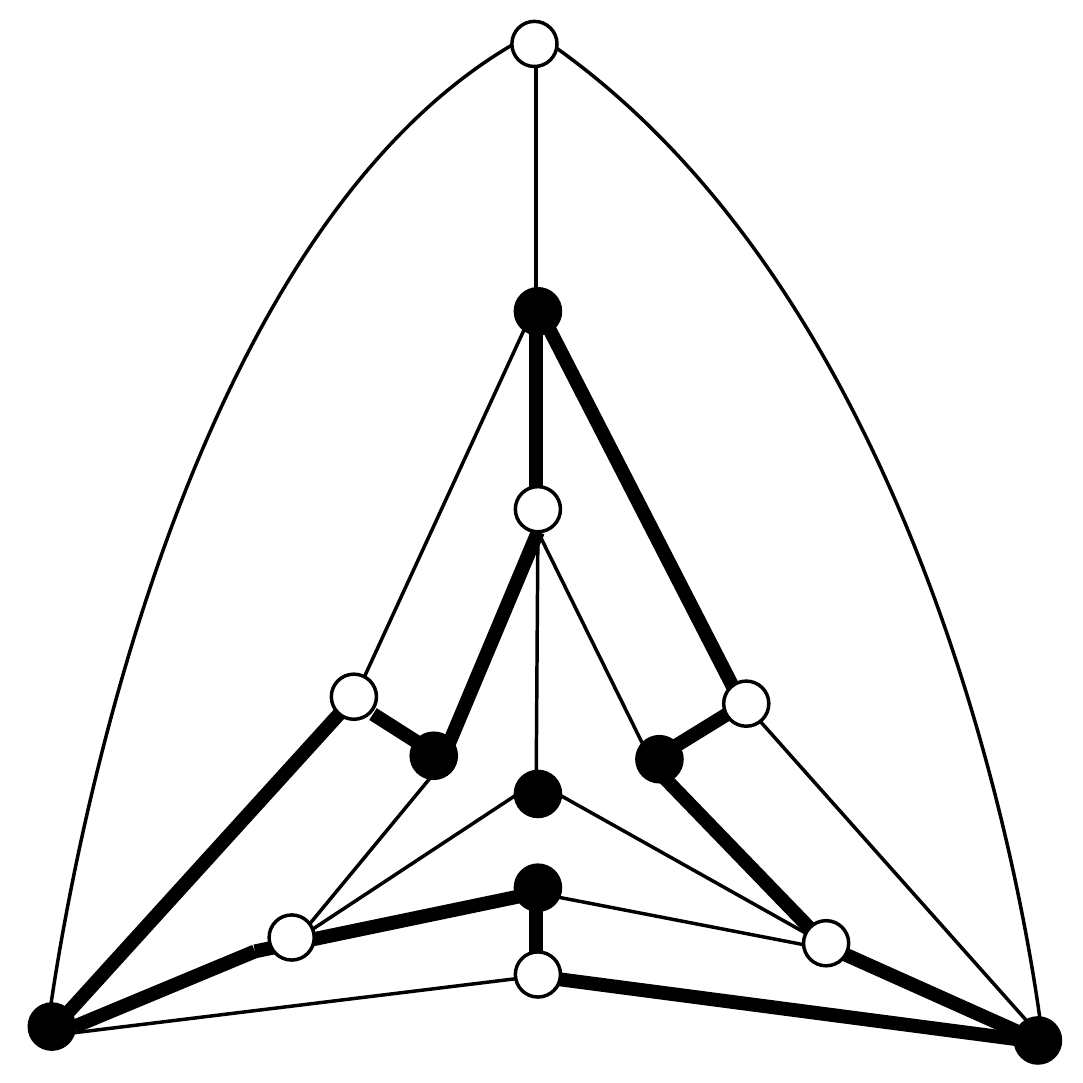}
\caption{$I(3$-ear) admitting a symmetric cycle (bold edges) of length 12.}
\label{fig18a21}
\end{figure}

The map $E_4$ given in Figure \ref{fig18b} shows that $4$-ear graph is antipodally self-dual. One can easily mimic this embedding for any even integer $n\ge 4$.

\begin{figure}[H]
\centering
\includegraphics[width=0.84\linewidth]{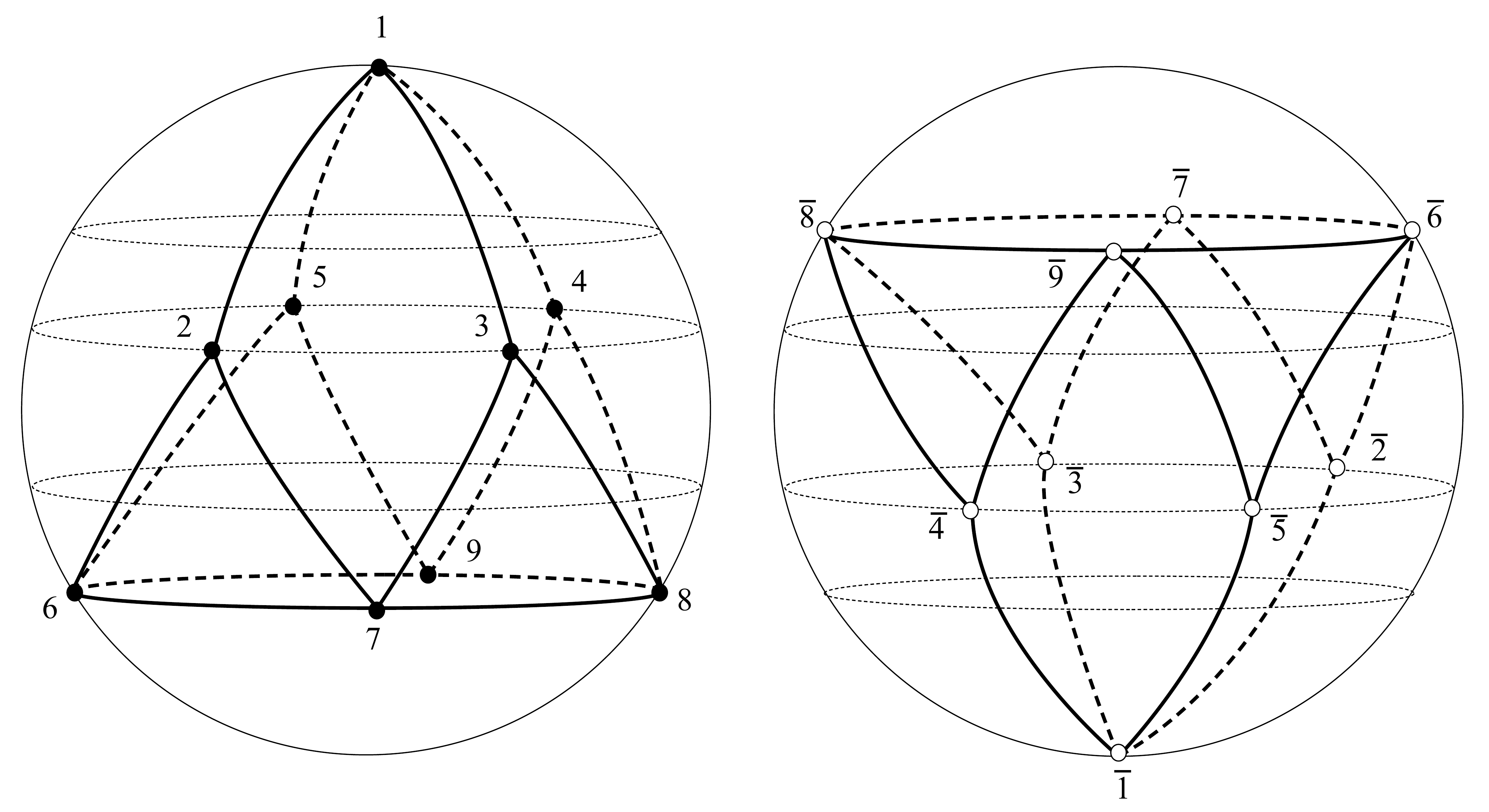}
\caption{A antipodally self-dual map of $E_4$ (black vertices) and its dual (white vertices). Antipodal vertices are given by $k$ and $\bar k$.}
\label{fig18b}
\end{figure}

\subsection{The $(n,\ell)$-pancake} Let $n\ge 3$ and $l\ge 1$ be integers. The $(n,\ell)$-{\em pancake}, denoted by $P_n^\ell$, is the graph consisting of $\ell$ cycles $\{v_1^1,\dots ,v_n^1\},\dots , \{v_1^\ell,\dots ,v_n^\ell\}$, a vertex $v_i^0$ and edges $\{v_i^{j-1},v_i^{j}\}$ for each $j=1,\dots ,n$ and all $i$, see Figure  \ref{fig18c}.
\smallskip

\begin{proposition}\label{prop;pancake} The $(n,\ell)$-pancake is antipodally self-dual if and only if $n\ge 3$ is odd for all $\ell\ge 1$.
\end{proposition}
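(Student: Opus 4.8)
\textbf{Proof proposal for Proposition \ref{prop;pancake}.}

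The plan is to follow the template set by Propositions \ref{prop;wheel} and \ref{prop;ear}, splitting into the ``if'' and ``only if'' directions. For the ``if'' direction, assuming $n\ge 3$ is odd, I would exhibit an explicit strongly involutive duality-isomorphism $\sigma: P_n^\ell \to (P_n^\ell)^*$ and invoke Corollary \ref{cor;stronginv}. The key observation is that $P_n^\ell$ carries a natural $\mathbb{Z}_n$-rotational symmetry permuting the $\ell$ cycles coherently, together with a ``flip'' symmetry; the dual $(P_n^\ell)^*$ should itself be a pancake-like map, so that $\sigma$ can be taken as the composition of a combinatorial shift by $(n\pm 1)/2$ around each cycle (this is where oddness of $n$ enters, guaranteeing $(n\pm 1)/2$ is an integer producing an involution with no fixed vertex) with the duality correspondence. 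I would set up coordinates $v_i^j$ as in the definition, identify the faces of $P_n^\ell$ (the outer/inner $n$-gons and the quadrilateral faces $\{v_i^{j-1}, v_{i+1}^{j-1}, v_{i+1}^{j}, v_i^{j}\}$), match them bijectively with vertices of the dual, and verify conditions (a) and (b) of strong involutivity by a direct check: $\sigma^2 = \mathrm{id}$ follows from the shift being by a half-period modulo $n$, and $v\notin\sigma(v)$ follows because the shift moves every index nontrivially. A figure analogous to Figures \ref{fig18aa} and \ref{fig18a} displaying the labeling $\sigma(k)=\bar k$ would accompany this.

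For the ``only if'' direction, assuming $n$ is even, I would show $P_n^\ell$ is not antipodally self-dual by producing a symmetric cycle of length $2k$ with $k$ even in $I(P_n^\ell)$ and applying Theorem \ref{theom;ant1}. The natural candidate is the cycle in $I(P_n^\ell)$ obtained by alternating between the vertices $v_i^0$ (or more generally the ``central'' vertex/vertices) and the dual vertices corresponding to the faces incident to them, tracing once around the relevant $n$-gon; when $n$ is even this closed walk in the vertex-face incidence graph has length $2n$ with $n$ even, and the rotational automorphism by $n/2$ (which swaps interior and exterior of the cycle, since the two ``sides'' of the pancake are interchanged) certifies it is symmetric. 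This mirrors exactly the mechanism in Figures \ref{fig18a2} and \ref{fig18a21}.

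The main obstacle I anticipate is the ``if'' direction's bookkeeping when $\ell \ge 2$: one must check that a single consistent shift works simultaneously across all $\ell$ concentric cycles and correctly interchanges the two ``polar'' faces (the innermost and outermost $n$-gons) with the two central-type vertices, and that the induced map on quadrilateral faces is genuinely an inclusion-reversing involution without any fixed face. In particular, verifying property (b) (no $v\in\sigma(v)$) for the quadrilateral faces requires that no quadrilateral is sent to a vertex lying on its own boundary, which again hinges on the half-period shift together with $n$ odd. Identifying precisely which element of $\{(n-1)/2,(n+1)/2\}$ to shift by, and confirming the dual map $(P_n^\ell)^*$ has the expected combinatorial type (so the duality correspondence is well-defined), are the delicate points; once those are pinned down the remaining verifications are routine, and I would present them compactly with reference to an explicit small case ($n=3$, $\ell=1$ or $2$) in a figure.
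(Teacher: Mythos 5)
Your proposal follows essentially the same route as the paper: for odd $n$ the paper exhibits a strongly involutive duality-isomorphism (given only via Figure \ref{fig18c}) and appeals to Corollary \ref{cor;stronginv}, while for even $n$ it points to a symmetric cycle of length $2k$ with $k$ even in $I(P_n^\ell)$ (Figure \ref{fig18c1}) and invokes Theorem \ref{theom;ant1} --- exactly your two-pronged plan, and your version is in fact more explicit than the paper's two-line argument. The only small slip is in identifying the certifying automorphism for the symmetric cycle: it must exchange interior and exterior, so it has to involve the black--white (duality) swap of $I(P_n^\ell)$ interchanging the central vertex with the outer-face vertex, not merely a rotation by $n/2$.
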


\begin{proof} It can be easily checked that $(n,\ell)$-pancake admits a strongly involutive duality-isomorphism for all integers $n\ge 3, \ell\ge 1$ with $n$ odd, see Figure \ref{fig18c}. Moreover, if $n$ is even then $I((n,\ell)$pancake$)$ admits a symmetric cycle of length $2k$ with $k$ even, see Figure \ref{fig18c1}. Thus, by Theorem \ref{theom;ant1}, $(n,\ell)$-pancake is not antipodally self-dual. 
\end{proof}

\begin{figure}[H]
\centering
\includegraphics[width=0.75\linewidth]{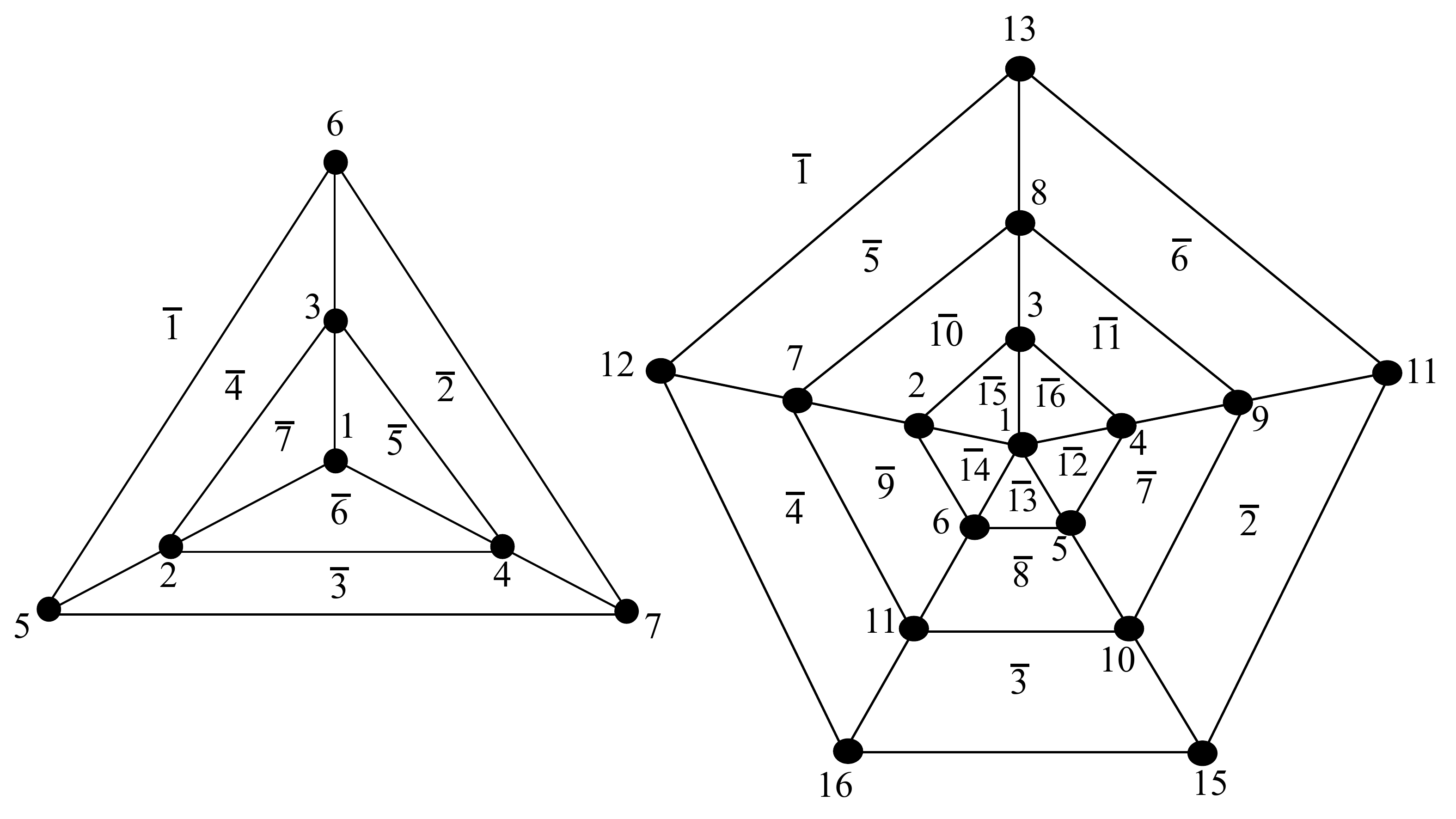}
\caption{$P_3^2$ and $P_5^3$ together with a strongly involutive duality-isomorphism given by $\sigma(k)=\bar k$.}
\label{fig18c}
\end{figure}

\begin{figure}[H]
\centering
\includegraphics[width=0.37\linewidth]{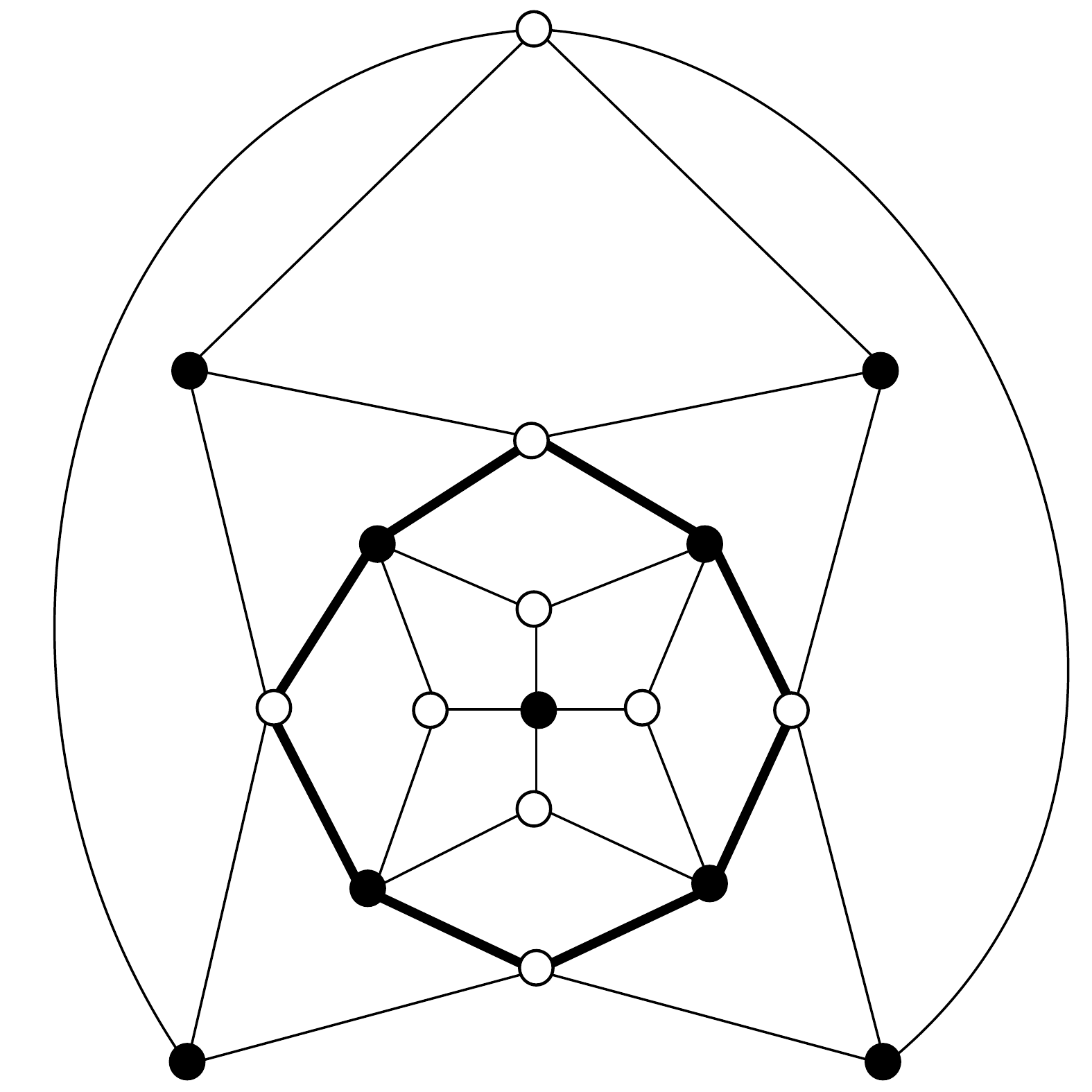}
\caption{$I(P_4^2)$ admitting a symmetric cycle (bold edges) of length 8.}
\label{fig18c1}
\end{figure}

The map of $P_5^2$ given in Figure \ref{fig18d} shows that $(3,2)$-pancake is self-dual antipodal. One can easily mimic this embedding for any odd integer $n\ge 3$ and any $\ell\ge 1$.

\begin{figure}[H]
\centering
\includegraphics[width=0.84\linewidth]{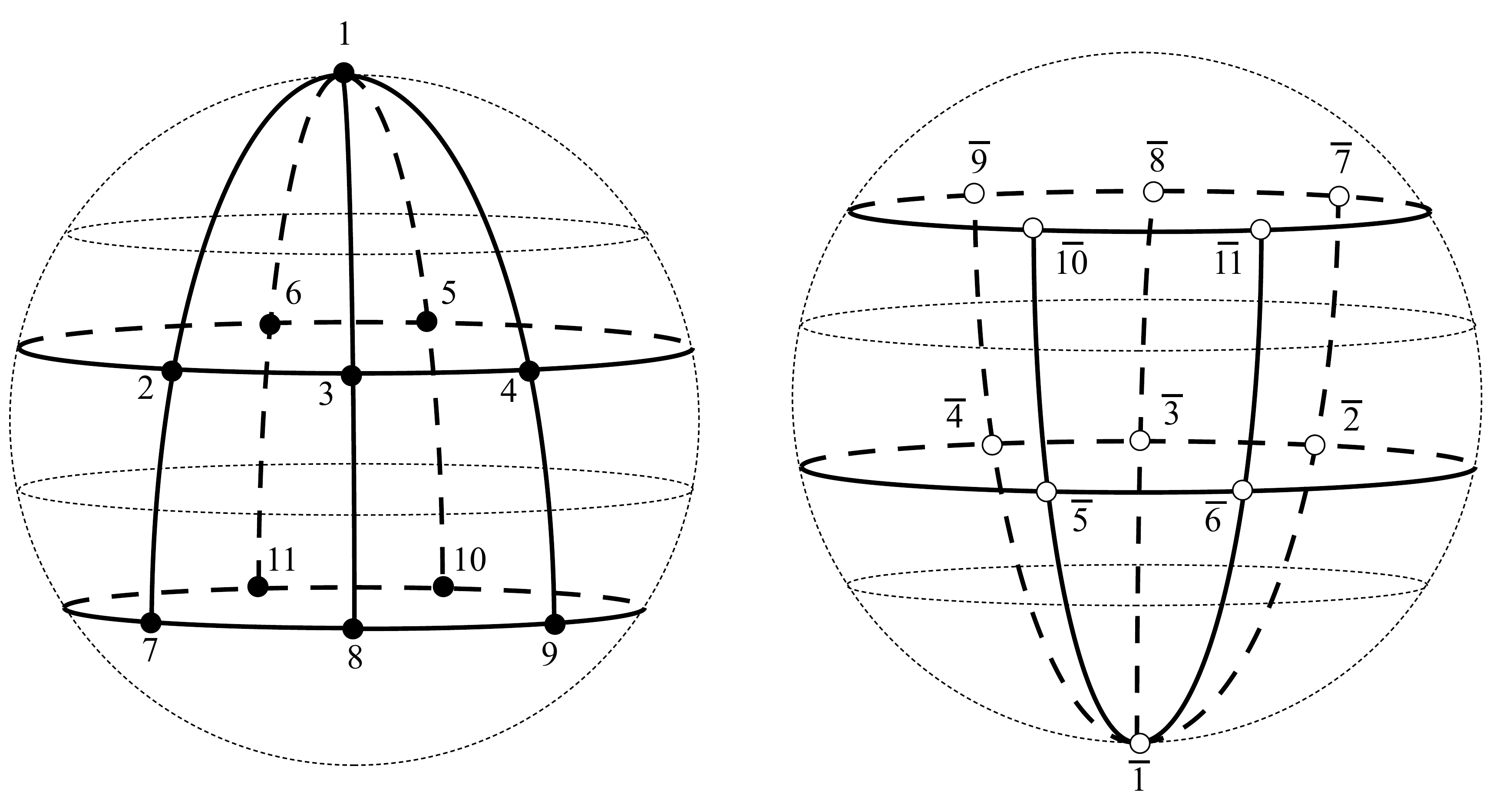}
\caption{A antipodally self-dual map of $P_5^2$ (black vertices) and its dual (white vertices). Antipodal vertices are given by $k$ and $\bar k$.}
\label{fig18d}
\end{figure}

\subsection{Adhesion construction}
Let us give a way to construct infinite families of antipodally self-dual graphs. The latter is based on a procedure to construct self-dual graphs called the {\em adhesion}, given in \cite{SC}. Let $G$ be a planar connected graph and let $G^*$ be its geometric dual. Let $x$ (resp. $x^*$) be the vertex corresponding to the exterior face of $G^*$ (resp. exterior face of $G^{**}=G$). We define the graph $G \diamond G^*$ obtained by identifying $x$ and $x^*$, see Figure \ref{fig7}.
\begin{figure}[H]
\centering
\includegraphics[width=0.6\linewidth]{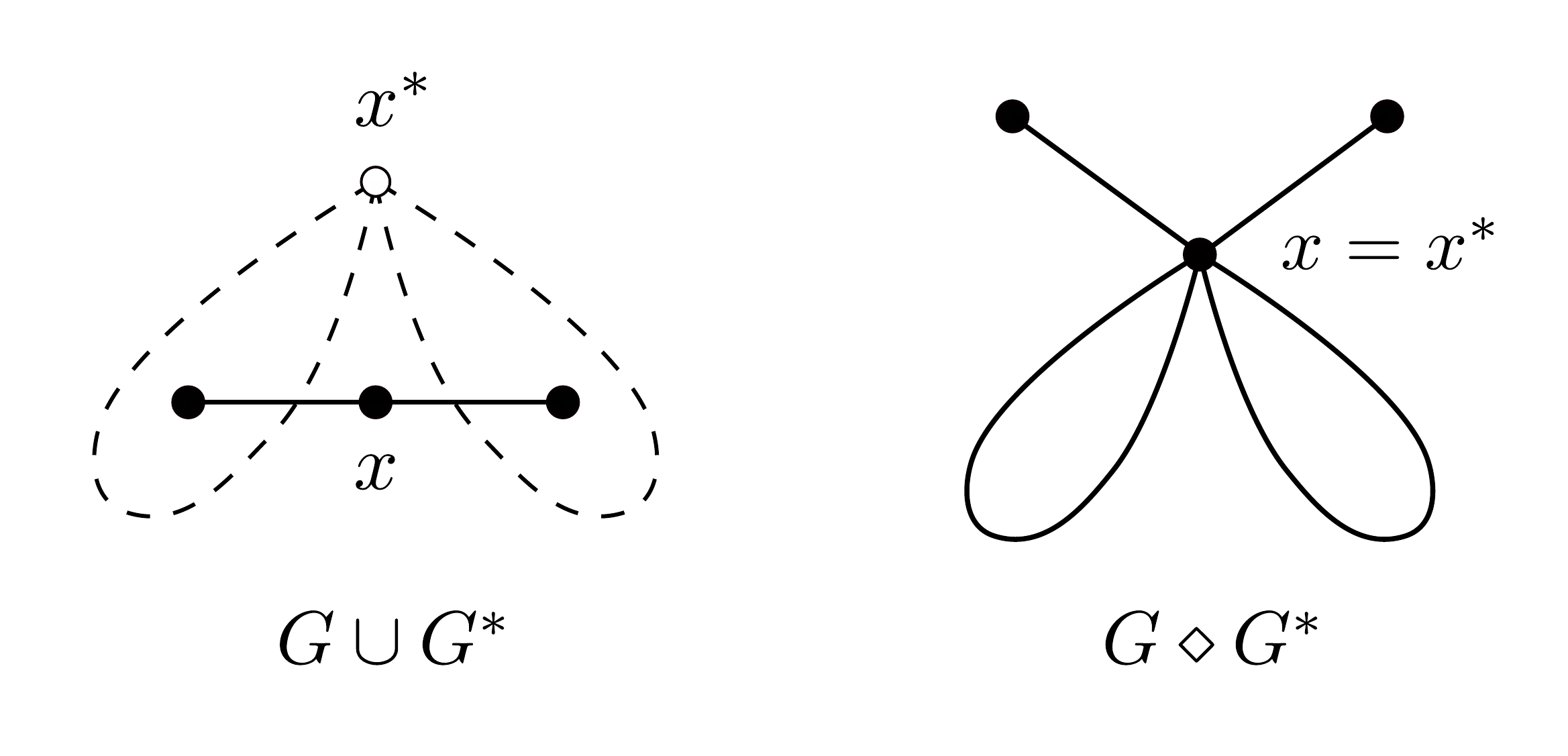}
\caption{(Left) Draw of $G$ and its dual. (Right) The adhesion of $G$.}
\label{fig7}
\end{figure}

\begin{lemma}\label{lem:diamond}\cite{SC} Let $G$ be a planar connected graph. Then, the graph $G \diamond G^*$ is self-dual.
\end{lemma}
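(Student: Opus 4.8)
The plan is to exhibit an explicit self-duality of $G \diamond G^*$ by combining the embedding of $G$ with the embedding of $G^*$ along the identified vertex. First I would fix a plane (or sphere) embedding of $G$, which simultaneously fixes the embedding of $G^*$, with $x^*$ sitting in the exterior face of $G$ and $x$ sitting in the exterior face of $G^*$; after the identification $x = x^*$ we obtain a connected plane graph $H := G \diamond G^*$ in which the two ``halves'' $G$ and $G^*$ meet only at the single vertex $p$ obtained from identifying $x$ and $x^*$. The key bookkeeping point is to describe the faces of $H$: the faces of $H$ are exactly the bounded faces of $G$ together with the bounded faces of $G^*$ together with one new outer face. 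Since the bounded faces of $G$ correspond to the non-$x$ vertices of $G^*$, and the bounded faces of $G^*$ correspond to the non-$x^*$ vertices of $G$, we get a natural bijection between $V(H) \setminus \{p\}$ and the bounded faces of $H$, which is precisely what a self-duality needs on the vertex/face level.

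Next I would write down the candidate map $\psi : (V(H), E(H), F(H)) \to (F(H^*), E(H^*), V(H^*))$. On vertices: send $p$ to the outer face of $H$; send a non-$x$ vertex $v$ of the $G$-part to the bounded face of $H$ (a face of $G^*$) that $v$ corresponds to under the duality $G \leftrightarrow G^*$; symmetrically send a non-$x^*$ vertex of the $G^*$-part to the face of $H$ (a face of $G$) it corresponds to. On edges: $H$ inherits the edges of $G$ and the edges of $G^*$, and the standard duality pairing $e \leftrightarrow e^*$ between $E(G)$ and $E(G^*)$ gives an involution on $E(H)$; I would check this is exactly the edge map induced by $\psi$. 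On faces: send the outer face of $H$ to $p$, send a bounded face coming from $G$ to the vertex of $G^*$ it corresponds to (which is a non-$x$ vertex, hence a vertex of the $G^*$-part of $H$), and symmetrically for bounded faces coming from $G^*$. Then I would verify that $\psi$ reverses inclusions, i.e. if vertex $u$ lies on face $f$ in $H$ then $\psi(f)$ lies on $\psi(u)$ in $H^*$; this splits into the case where $u,f$ are both on the $G$-side (where it is the defining incidence-reversing property of the duality $G \leftrightarrow G^*$), the case where they are both on the $G^*$-side (same, by symmetry), and the mixed cases involving $p$ and the outer face, which are handled by the observation that $p$ is incident to every bounded face touching the ``seam'' and the outer face is incident only to $p$ on each side.

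The main obstacle I anticipate is the careful treatment of the shared vertex $p$ and of the new outer face of $H$: one must confirm that after the adhesion the rotation system at $p$ is the concatenation of the rotation of $x$ in $G$ and of $x^*$ in $G^*$, so that no spurious face is created and the only genuinely new face is the outer one, and one must check that the edges of $G$ incident to $x$ (which become boundary edges of the outer face on the $G$-side) are dual, under the pairing, exactly to the edges of $G^*$ incident to $x^*$ (which bound the outer face on the $G^*$-side) — this is what makes the edge-involution consistent with $\psi$ globally rather than just on each half. Once the combinatorics at $p$ is pinned down, the incidence-reversal checks are routine. I would also remark that since the construction is symmetric in $G$ and $G^*$, one gets for free that the self-duality $\psi$ is involutive, which is the feature exploited later when feeding these graphs into the antipodality machinery; but strictly for the statement of Lemma \ref{lem:diamond} it suffices to produce one incidence-reversing isomorphism $H \to H^*$, as above.
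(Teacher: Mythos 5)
Your proposal is correct in substance, but it takes a much more explicit route than the paper. The paper's entire proof is the one-line formal identity $H^*=(G\diamond G^*)^*=G^*\diamond G^{**}=G^*\diamond G=G\diamond G^*$, i.e.\ it simply asserts that dualization commutes with adhesion and that $\diamond$ is symmetric up to isomorphism. Your argument is precisely the bookkeeping needed to justify that identity: the face count of $H$ (bounded faces of $G$, bounded faces of $G^*$, one new outer face), the explicit incidence-reversing map $\psi$ with $p\mapsto f_\infty$, and the verification at the seam. What your approach buys is an actual proof of the commutation the paper takes for granted, together with the observation (implicit in the paper's Figure 8 remark) of exactly where the special choice of $x$ and $x^*$ enters: it guarantees that $x$ and $x^*$ each lie on the outer boundary of their half, so that the wedge creates exactly one new face and the unbounded faces of $G$ and $G^*$ correspond to $x^*$ and $x$ respectively. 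What the paper's route buys is brevity and the immediate bonus that the resulting self-duality is visibly involutive. One small correction to your seam analysis: the outer face $f_\infty$ of $H$ is \emph{not} incident only to $p$; it is incident to every vertex on the outer boundary of $G$ and of $G^*$. The mixed incidence check still goes through, but for a different reason: a vertex $u\neq p$ of the $G$-side lies on $f_\infty$ iff $u$ lies on the outer face $f_{x^*}$ of $G$, iff $x^*$ lies on the face $f_u$ of $G^*$ (by duality in $G$), iff $p=\psi(f_\infty)$ lies on $\psi(u)=f_u$ in $H$. With that repair the verification is complete.
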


\begin{proof}  $H=G \diamond G^*$ is clearly self-dual since $H^*=(G \diamond G^*)^*=G^* \diamond G=G \diamond G^*$.
\end{proof}

Notice that in the construction of $G \diamond G^*$ the couple $x$ and $x^*$ cannot be replaced by \underline{any} pair of vertices since we may end up with a not self-dual graph, see Figure \ref{fig8}.

\begin{figure}[H]
\centering
\includegraphics[width=0.35 \linewidth]{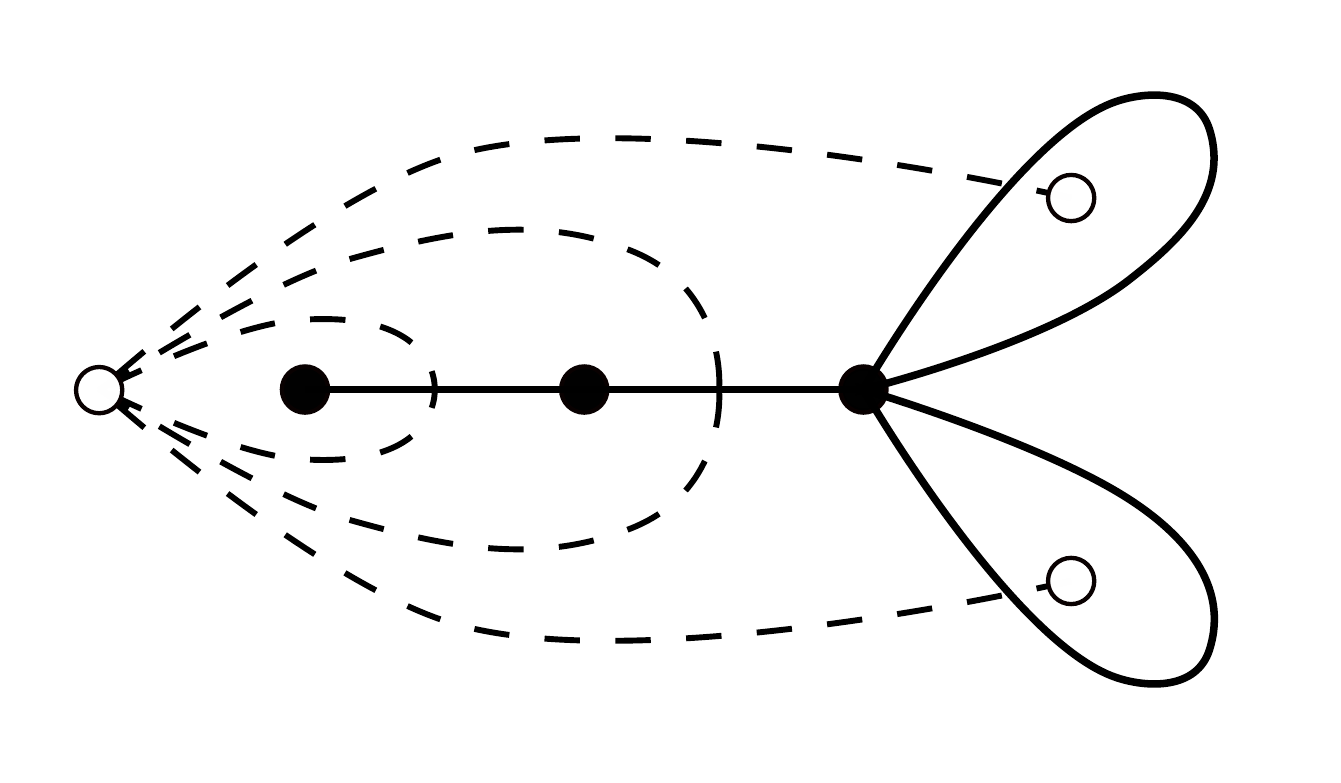}
\caption{A glueing of $G$ and $G^*$ by another pair of vertices which leads to a map which is clearly not self-dual.}
\label{fig8}
\end{figure}
\begin{theorem}\label{thm;adh} Let $G$ be a planar connected graph. Then, $G \diamond G^*$ is antipodally self-dual.
\end{theorem}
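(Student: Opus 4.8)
The plan is to construct an explicit antipodal embedding of $H = G \diamond G^*$ by exploiting the structure of the adhesion and the duality $H^* = H$ established in Lemma \ref{lem:diamond}. First I would set up coordinates: place the identified vertex $x = x^*$ at the north pole $N$ of $\stw$, and draw $G$ (minus the exterior-face structure) inside the northern hemisphere, so that the vertex of $G^*$ corresponding to the \emph{exterior} face of $G$ sits at the south pole $S = -N$. The adhesion glues a copy of $G^*$ on — I would draw this second copy as the image of the first hemisphere drawing under the antipodal map $a: x \mapsto -x$, so that $G$ lives in the northern hemisphere and the attached $G^*$ lives in the southern hemisphere, meeting only at the shared point $N$ (and the equator being crossed by the edges joining the two pieces, if any — in the adhesion the two pieces share exactly the one vertex, so the equator can be taken to separate them cleanly).

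The key point is then to verify that with this drawing, the dual $H^*$ is literally $-\widehat{H}$, i.e. that $\widehat H$ is antipodally self-dual in the sense defined before Theorem \ref{theom;ant1}. The duality isomorphism $\sigma: H \to H^*$ is the one underlying $H^* = G^* \diamond G = H$: it swaps the "$G$-part" with the "$G^*$-part". Concretely, a vertex $v$ of $G$ (in the northern hemisphere) must be sent by $\sigma$ to the vertex of $H^*$ lying at $-v$; but $-v$ is in the southern hemisphere, which is exactly where the $G^*$-copy (the dual part) sits, and by construction that copy was placed as the antipodal image of the northern drawing. So I would check that the natural bijection realizing $H \cong H^*$ coincides, point by point on $\stw$, with the antipodal map: vertices of $G$ go to the antipodes, which are the vertices of the dual piece; the shared vertex $N$ is a fixed point of the \emph{isomorphism} but note it need not be fixed by $\sigma$ since $\sigma$ reverses the roles — here one must be slightly careful, since $N = x = x^*$ is self-paired. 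I expect this to force a small check that $N$ can be taken to a face of $H^*$ rather than a vertex, or alternatively to lie on the equator appropriately; this is the kind of delicate bookkeeping the proof will need.

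Alternatively, and probably more cleanly, I would invoke Theorem \ref{thm;fixedpoint}: it suffices to exhibit an involutive labeling of $I(H)^\square$ without fixed vertices. Here the adhesion structure helps a great deal, because $I(H)$ decomposes according to the $G$-part and the $G^*$-part, and the self-duality $H^* = G^* \diamond G$ provides a canonical involution $\sigma$ on $V(H) \cup V(H^*)$ swapping the two parts. I would label each vertex $v$ in the $G$-part by a fresh symbol $x_v$ and its partner $\sigma(v)$ in the $G^*$-part by $\overline{x_v}$, then extend to the edge- and face-vertices of the squares graph. The four involutive-labeling axioms (i)–(iv) reduce to: $\sigma$ is an involution (clear from $H^{**} = H$), $\sigma$ has no fixed vertices (clear since it swaps $G$-part with $G^*$-part, which are disjoint except at the apex — and the apex, being both $x$ and $x^*$, must be handled by checking it lies properly in a face or edge of $I(H)^\square$ rather than a vertex of $I(H)$), and $\sigma$ respects adjacency (inherited from the duality isomorphism).

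The main obstacle, in either route, is the apex vertex $N = x = x^*$ created by the identification: it is the one place where the "$G$-part" and "$G^*$-part" of $H$ overlap, so it is a potential fixed point of the involution, which would violate the no-fixed-vertex / antipodality requirement. The resolution I anticipate is that in $I(H)^\square$ this identified vertex does not survive as a vertex of $I(H)$ that must be fixed — rather, the local picture around $N$ (one cell of $G$ and one cell of $G^*$ meeting there) gives the involution something genuinely to swap, or $N$ can be pushed to the equator so that the antipodal map sends it to a distinct point which is legitimately matched. Making this local analysis at $N$ rigorous — and confirming it does not introduce an edge of $I(H)$ with both ends labeled $k, \bar k$ near the apex — is where the real work lies; the rest is the bookkeeping of transporting the labeling through the squares-graph construction.
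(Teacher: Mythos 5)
Your overall strategy (build $\widehat H$ hemispherically and realize $H^*$ as the antipodal image) is the same as the paper's, but your setup contains a genuine geometric impossibility, and the one point you yourself flag as ``where the real work lies'' --- the apex --- is exactly the point the paper's construction is designed to handle, and you do not resolve it. Concretely: you place the identified vertex $x=x^*$ at the north pole $N$ and then declare the $G^*$-part to be the image of the northern drawing under the antipodal map, ``meeting only at the shared point $N$.'' This cannot happen: the antipodal map is fixed-point-free, so the antipodal image of a set containing $N$ contains $-N=S$, not $N$; two mutually antipodal pieces can never share a point. If the two pieces of $H$ are to share exactly one vertex and occupy opposite hemispheres, that vertex must lie \emph{on the equator}, and it must be matched under the antipodal map not with itself but with a \emph{different} equatorial point (which turns out to be the apex of $H^*$).

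The paper's proof supplies precisely this missing mechanism: draw $G$ \emph{together with its dual} $G^*$ inside a disk bounded by a circle $C$, with $x$ and $x^*$ placed at diametrically opposite points of $C$ and nothing else on $C$; take $C$ to be the equator, project the whole picture to the northern hemisphere, and glue in the antipodal copy on the southern hemisphere. Because $x$ and $x^*$ are antipodal on $C$, the southern image of $x^*$ lands exactly on the northern $x$ (giving the apex of $H$), while the southern image of $x$ lands on the northern $x^*$ (giving the apex of $H^*$ at the antipode), so $-\widehat H=\widehat H^*$ with no fixed point. Note also that the northern hemisphere must carry \emph{both} $G$ and $G^*$ (each supplying half of $H$ and half of $H^*$), not just $G$ as in your setup. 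Your alternative route via Theorem \ref{thm;fixedpoint} suffers from the same unresolved apex issue (is the apex a fixed vertex of the induced involution on $I(H)^\square$ or not?), and in addition you would still need to verify the no-fixed-vertex condition for the $V_E$- and $V_F$-type vertices of $I(H)^\square$, which you only mention in passing. As written, the proposal identifies the right obstacle but does not overcome it.
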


\begin{proof} By Lemma \ref{lem:diamond} $H=G \diamond G^*$ is self-dual. Let us show that $H$ admits an antipodal map.  Let $x$ (resp. $x^*$) be the vertex corresponding to the exterior face of $G^*$ (resp. exterior face of $G=G^{**}$). We first draw $G$ and its dual within a circle $C$ such that $x$ and $x^*$ are antipodal points on $C$ and no other edge or vertex (of $G$ or $G^*$) lie on $C$, see Figure \ref{fig9}. 

\begin{figure}[H]
\centering
\includegraphics[width=0.3\linewidth]{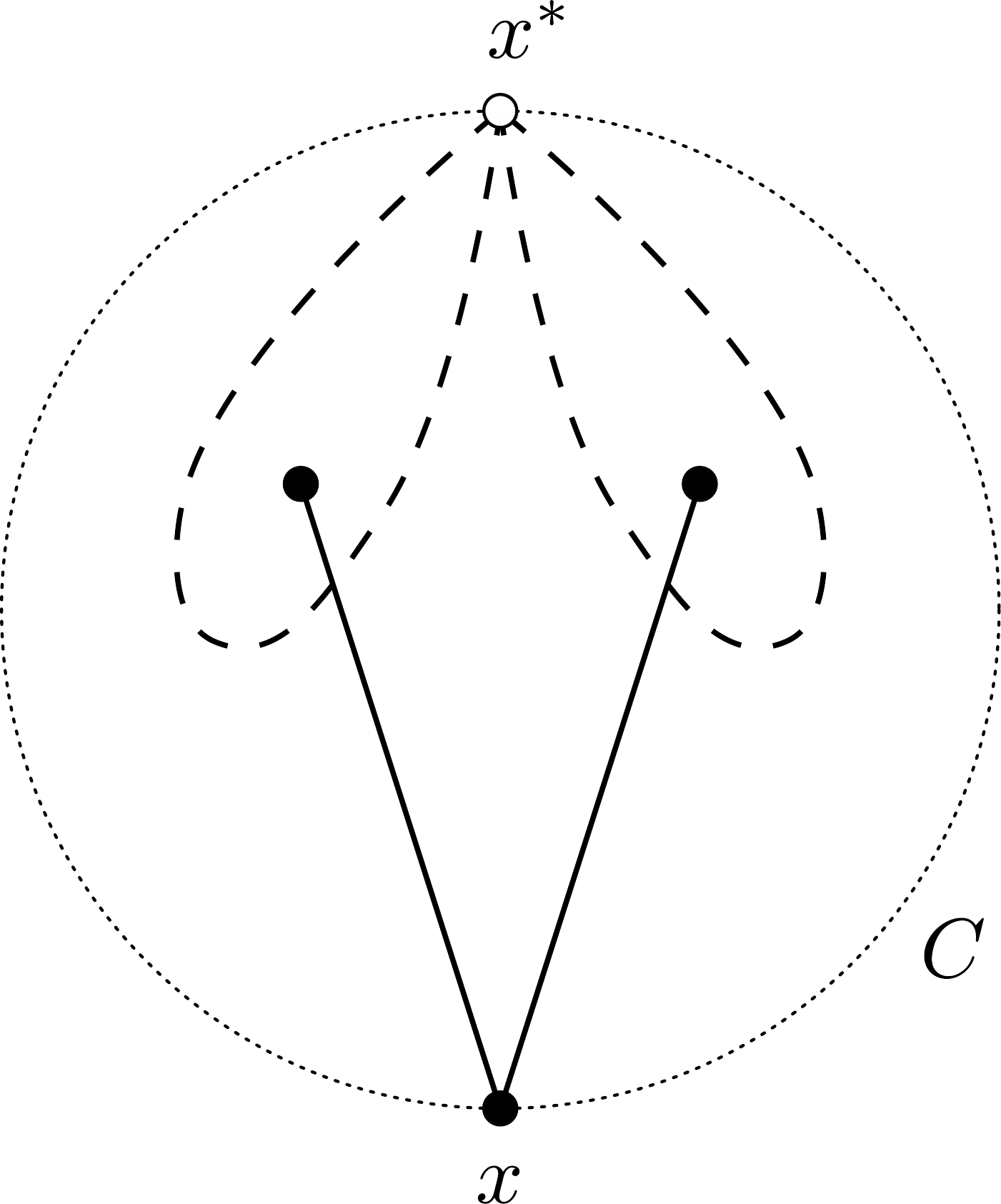}
\caption{Embedding of $G$ and $G^*$ inside circle $C$ with vertex $b$ diametrically opposed to vertex $b^*$.}
\label{fig9}
\end{figure}

We shall construct two embeddings (one in the Northern hemisphere and the other in the Southern one) that will be glued together giving the desired antipodally self-dual embedding of $H$. For, 
we consider $C$ as the equator of $\mathbb{S}^2$ and project our drawing perpendicularly to the Northern hemisphere of $\mathbb{S}^2$. We then take the antipodal of the latter embedding, obtaining and embedding in the Southern hemisphere.
\smallskip

We finally glue together both embeddings along the equator ($x$ and $x^*$ are the only vertices that are identified twice on the equator). By construction, this is an antipodal map of $H$, see Figure \ref{fig10}.  
\end{proof}

\begin{figure}[H]
\centering
\includegraphics[width=.9\linewidth]{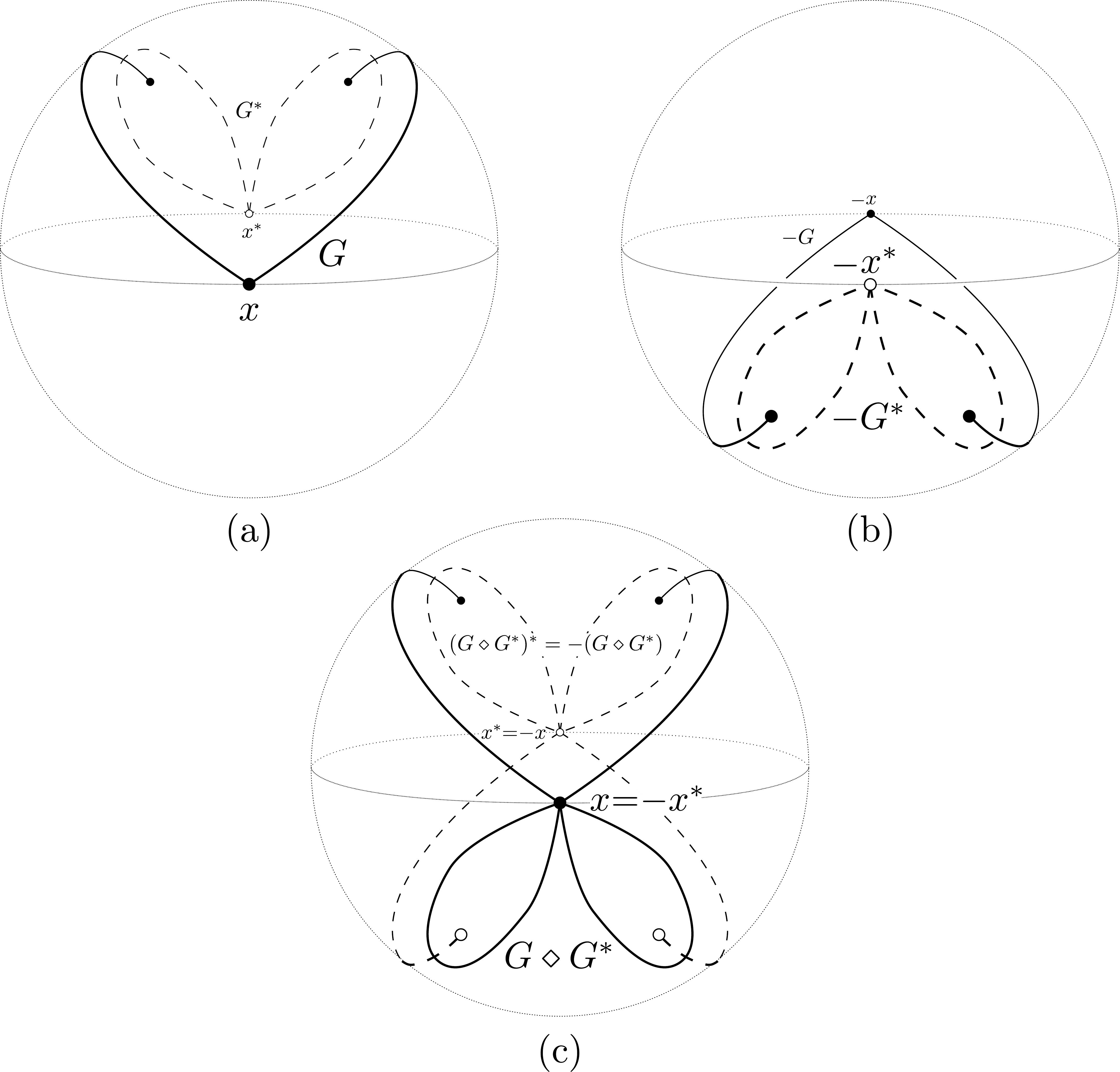}
\caption{(a) Embedding of the draws of $G$ and $G^*$ in the Northern hemisphere (b) Antipodal embedding of the draw of the Northern hemisphere (c) Antipodal embedding of $H=G \diamond G^*$ (bold edges) and $H^*$ (dashed edges).}
\label{fig10}
\end{figure}

\begin{question} Let $H$ be a antipodally self-dual graph with a cut-vertex.  Is it true that  $H=G \diamond G^*$ where $G$ is a planar connected graph and $G^*$ its geometric dual ?
\end{question}


\section{antipodally symmetric maps}\label{sec;self-anti}
A map $G$ is said to be {\em antipodally symmetric} if $-\widehat{G}=\widehat{G}$ where $-G$ is the map consisting of points $\{-x\in \stw \ |\  x\in G\}$. 

\begin{remark} (a) $med(G)=med(G^*)$. 
\smallskip

(b) If $G$ is self-dual then $|V(med(G))|$ is even. Indeed, by Euler's formula we have $|V(G)|+|F(G)|=2+|E(G)|$ where $F(G)$ denote the set of faces of $G$. Since $G$ is self-dual then $|V(G)|=|V(G^*)|=|F(G)|$ and thus $2|V(G)|=2+|E(G)|$ implying that $|E(G)|=|V(med(G))|$ is even.
\end{remark}

\begin{lemma}\label{lem:key} Let $G$ be an antipodally self-dual map. Then, $med(G)$ and $I(G)$ are antipodally symmetric.
\end{lemma}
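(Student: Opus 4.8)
\textbf{Proof plan for Lemma \ref{lem:key}.}

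The plan is to exploit the homeomorphism $\rho$ supplied by the Steinitz-type realization theorem (the one producing $\widehat{G}$ and $\widehat{G}^*$) together with the observation that the medial and the vertex-face incidence graph are \emph{canonically} built from the pair $(\widehat{G},\widehat{G}^*)$ sitting simultaneously in $\stw$. Concretely, since $G$ is antipodally self-dual we have $-\widehat{G}=\widehat{G}^*$, so the antipodal map $a\colon x\mapsto -x$ interchanges the point set of $\widehat{G}$ with that of $\widehat{G}^*$ while fixing, setwise, the union $\widehat{G}\cup\widehat{G}^*$. First I would record that the square graph $\widehat{G}^\square$, being the common refinement of the drawings of $\widehat{G}$ and $\widehat{G}^*$ (with edges split at the points of $X(\widehat{G},\widehat{G}^*)$), is therefore invariant under $a$: the vertex set $V_V\cup V_E\cup V_F$ is sent to $V_F\cup V_E\cup V_V$ (black vertices $\leftrightarrow$ white vertices, intersection points $\leftrightarrow$ intersection points), and incidences are preserved because $a$ is a homeomorphism of $\stw$. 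Hence $-\widehat{G}^\square=\widehat{G}^\square$.

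Next I would descend from $\widehat{G}^\square$ to its two families of diagonals. The medial $med(G)$ is, by definition, the graph on the intersection points $X(\widehat{G},\widehat{G}^*)=V_E$ whose edges are the intersecting diagonals of the square faces of $\widehat{G}^\square$; since $a$ permutes the square faces of $\widehat{G}^\square$ among themselves and carries each intersecting diagonal to an intersecting diagonal (it cannot turn one into an incidence diagonal, as it preserves the $V_E$/$(V_V\cup V_F)$ distinction), we get $-\widehat{med(G)}=\widehat{med(G)}$, i.e. $med(G)$ is antipodally symmetric. Here one should also invoke Remark (a), $med(G)=med(G^*)$, to make sure the object is well defined independently of which of $\widehat{G},\widehat{G}^*$ one calls the primal. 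Symmetrically, the vertex-face incidence graph $I(G)$ has vertex set $V\cup V^*=V_V\cup V_F$ and edge set the incidence diagonals of $\widehat{G}^\square$; the same argument, now using that $a$ carries incidence diagonals to incidence diagonals and swaps the two colour classes, yields $-\widehat{I(G)}=\widehat{I(G)}$. (One may alternatively get the $I(G)$ statement for free from the $med(G)$ statement via the duality $I(G)=med(G)^*$ noted in the preliminaries, since the antipodal image of a map and of its dual are again dual, but the direct diagonal argument is cleaner.)

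The only genuine subtlety — and the step I expect to be the main obstacle — is a realization/compatibility issue rather than a combinatorial one: I must be sure that the homeomorphism $\rho$ realizing $Aut$ (or $Dual$) as isometries for $G$ can be taken to be the \emph{same} one realizing the relevant automorphism group for $med(G)$ and for $I(G)$, so that the single statement ``$-\widehat{H}=\widehat{H}$'' makes sense with one fixed embedding $\widehat{H}=\rho(H)$. This is handled by observing that all three maps $\widehat{G}$, $\widehat{med(G)}$, $\widehat{I(G)}$ live inside the \emph{one} refined picture $\widehat{G}^\square\subset\stw$: we fix the antipodal realization of $\widehat{G}$ and $\widehat{G}^*$ coming from antipodal self-duality, read off $\widehat{G}^\square$ from it, and then \emph{define} the embeddings of $med(G)$ and $I(G)$ as the corresponding diagonal subcomplexes; antipodal symmetry of these embeddings is then exactly the invariance $-\widehat{G}^\square=\widehat{G}^\square$ established above. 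Finally, for the length/parity remark one notes $|V(med(G))|=|E(G)|$ is even (Euler, as in the preceding Remark (b)), consistent with the fact that an antipodal involution on a finite point set in $\stw$ has no fixed points and hence pairs the vertices up.
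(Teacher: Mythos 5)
Your proposal is correct and follows essentially the same route as the paper: both arguments observe that antipodal self-duality makes the refined complex $\widehat{G}^\square$ invariant under the antipodal map (the paper phrases this as the square faces being partitioned into antipodal pairs), and then note that the intersecting (resp.\ incidence) diagonals can accordingly be drawn in antipodal pairs, yielding the antipodal symmetry of $med(G)$ (resp.\ $I(G)$). Your explicit remark that the embeddings of $med(G)$ and $I(G)$ must be \emph{defined} as diagonal subcomplexes of the fixed antipodal realization is exactly the point the paper handles with the phrase ``can also be antipodally embedded.''
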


\begin{proof} We first show that $med(G)$ is self-dual. For, let us consider a antipodally self-dual map $G$, that is, the dual map $G^*$ is antipodally embedded with respect to the map $G$. The latter induces a map $G^\Box$ in which square faces of $G^\Box$ are partitioned into pairs that are antipodally embedded in $\mathbb{S}^2$. Indeed, let $F=\{e_1,e_2,e_1^*,e_2^*\}$ be a face of $G^{\Box}$ where $e_1,e_2$  (resp. $e_1^*,e_2^*$) are the two half-edge induced by $e\in E(G)$ (resp. induced by $e^*\in E(G^*)$). Since $G$ is antipodally self-dual then there is an edge $f^*\in G^*$ (resp. an edge $f\in G$) which is antipodally embedded to $e\in G$ (resp. to $e^*\in G^*$). We thus have that the corresponding half-edges $f_1^*,f_2^*$ (resp. $f_1,f_2$) are also antipodally embedded with respect to $e_1,e_2$ (resp. to $e_1^*,e_2^*$). Obtaining an other face $F^*=\{f_1,f_2,f_1^*,f_2^*\}$ which is antipodally embedded with respect to $F$.
\smallskip

We thus have that the intersecting diagonals corresponding to faces $F$ and $F^*$ can also be antipodally embedded. The results follows by recalling that  $med(G)$ is given by all the intersecting diagonals of $G^\Box$, see Figure \ref{fig13}.

\begin{figure}[H]
\centering
\includegraphics[width=0.8\linewidth]{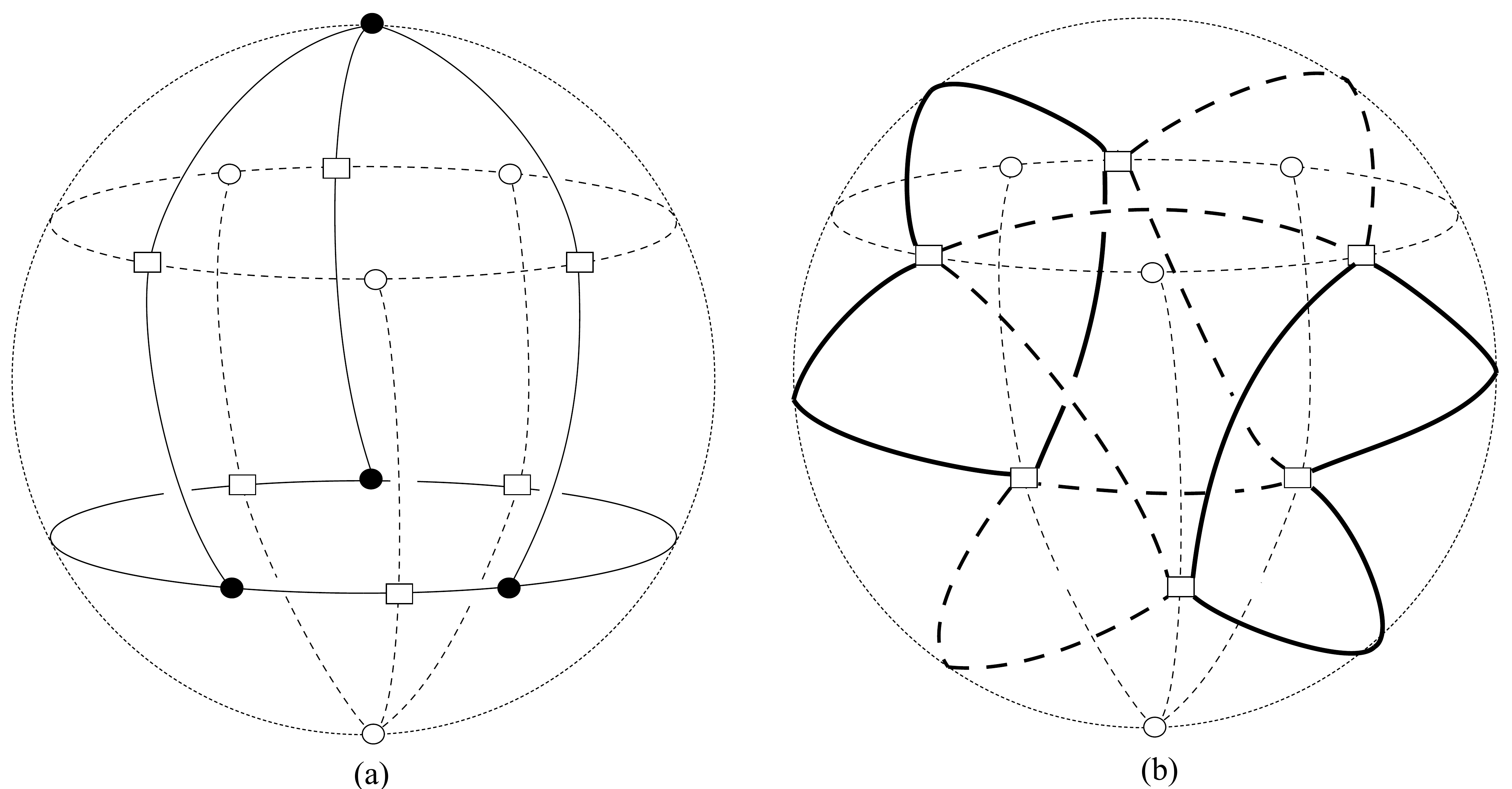}
\caption{(a) Embedding of $K_4$ (black vertices and straight edges), its dual (white vertices and dashed edges) and the vertices of the medial graph (little squares vertices) (b) $K_4$ (dashed edges) and its medial graph (in bold dashed edges) with two antipodal faces (in bold).}
\label{fig13}
\end{figure}

For $I(G)$, the proof goes in the same way as above but, this time, by considering the incidence diagonals instead of the intersecting diagonals.
\end{proof}

We end this section by proving Theorem \ref{theom;ant1}.
\smallskip

{\em Proof of Theorem \ref{theom;ant1}.}  Let $G$ be a antipodally self-dual map. Let
$\widehat{med(G)}$ be the drawing of $med(G)$ where all the automorphisms are isometries and let $E$ be the equator of $\stw$.
\smallskip

Suppose that $E$ does not contain any vertex of $\widehat{med(G)}$; Then, $E$ passes from a face $f$ of $\widehat{med(G)}$ to another face $f'$ that shares and edge with $f$. Since $med(G)^*=I(G)$ the pair faces $\{f,f'\}$ corresponds to a pair of adjacent vertices $\{v,v'\}$ in $V(I(G))$. Thus, the sequence of faces $(f_1,\ldots,f_n=f_1)$ intersected by $E$ (with the order induced by $E$) corresponds to a cycle $C$ in $I(G)$. Let $int(C)$ (resp. $ext(G)$) be the subgraph of $I(G)$ corresponding to the faces of $\widehat{med(G)}$ lying on the northern (resp. southern) hemisphere. By Lemma \ref{lem:key}, $med(G)$ is antipodally symmetric so the northern faces and the southern faces of $\widehat{med(G)}$ are antipodally drawn. Thus, $int(C)$ is map isomorphic to $ext(C)$ and thus $C$ is a symmetric cycle of $I(G)$.
\smallskip

Now, let us suppose that $E$ passes through a vertex of $med(G)$. Since the set of vertices of $med(G)$ is finite there exists a point $x\in E$ such that $x$ and $-x$ are not vertices of $med(G)$. Let $E_\alpha$ be the the rotation of $E$ of angle $\alpha$ on the line passing through $x$ and $-x$. Let $$\beta=\min_{\alpha>0}\{E_\alpha\text{ contains a vertex of }\widehat{med(G)}\}.$$ 
Then, $E_{\beta/2}$ is a great circle of $\stw$ which does not contain any vertex of $\widehat{med(G)}$. By taking $E_{\beta/2}$ as equator we can apply the above arguments to show that there exists a symmetric cycle of $I(G)$. 
\smallskip

Finally, if $C$ is a symmetric cycle of $I(G)$ then we can draw $\widehat{I(G)}$ with $C$ being the equator of $\stw$. Since $G$ is antipodally self-dual, a black vertex $v$ of $\widehat{I(G)}$ is antipodal to a white vertex $-v$. Thus, the length of $C$ must be $2n$ with $n\ge 1$ odd.  \hfill$\square$


\end{document}